\newcommand{\bu}{{\bf u}}
\newcommand{\bw}{{\bf w}}
\newcommand{\bx}{{\bf x}}
\newcommand{\be}{{\bf e}}
\newcommand{\bv}{{\mathbf v}}
\def\bbf{{\bf f}}
\def\bt{{\bf t}}
\def\bn{{\bf n}}
\def\bw{{\bf w}}
\def\3bar{{|\hspace{-.02in}|\hspace{-.02in}|}}
\begin{document}

\markboth{J.~JIA, L.~YANG AND Q. ZHAI}{The pressure-robust WG method for Stokes-Darcy problem}

\title{THE PRESSURE-ROBUST WEAK GALERKIN FINITE ELEMENT METHOD FOR STOKES-DARCY PROBLEM}
\author{Jiwei Jia, Lin Yang \and Qilong Zhai\footnote{Corresponding author}
\thanks{Department of Mathematics, Jilin University, Changchun, China. \\ 
	Email: jiajiwei@jlu.edu.cn, ~linyang22@mails.jlu.edu.cn, ~zhaiql@jlu.edu.cn}}

\maketitle

\begin{abstract}
In this paper, we propose a pressure-robust weak Galerkin (WG) finite element scheme to solve the Stokes-Darcy problem. To construct the pressure-robust numerical scheme, we use the divergence-free velocity reconstruction operator to modify the test function on the right side of the numerical scheme.  We prove the error between the velocity function and its numerical solution is independent of the pressure function and viscosity coefficient. Moreover, the errors of the velocity function and the pressure function reach the optimal convergence orders under the energy norm,  as validated by both theoretical analysis and numerical results.
\end{abstract}

\begin{classification}
65N30, 65N15, 65N12, 35B45.
\end{classification}

\begin{keywords}
Weak Galerkin finite element methods, Coupled Stokes-Darcy problems, Pressure-robust error estimate, Divergence preserving.
\end{keywords}

\section{Introduction}
This paper considers the Stokes-Darcy model, 
which couples the Stokes equations in the free flow region with the Darcy equations in the porous medium region. 

In the free flow region $\Omega_s$, the flow is governed by the following Stokes equations:
\begin{eqnarray}
	-\nabla \cdot T(\bu_s,p_s)=\bbf_s,~~ {\text{in}} ~\Omega_s,\label{StokesEquation1}\\
	\nabla \cdot \bu_s =g_s,~~ {\text{in}} ~ \Omega_s,\label{StokesEquation2}\\
	\bu_s={\bf{0}},~~ {\text{on}} ~ \Gamma_s,\label{StokesEquation3}
\end{eqnarray}
where $T(\bu_s,p_s)=2\mu D(\bu_s)-p_s I$ is the symmetric stress tensor and $D(\bu_s)= \color{black}{(\nabla u_s + ( \nabla u_s)^T)/2}$ is the strain tensor. {\color{black}Define $\mu$ and $I$ as the viscosity coefficient and the identity tensor, respectively.}

In the porous medium region $\Omega_d$, the flow is governed by the Darcy equations in the mixed formulation:
\begin{eqnarray}
	\mu \kappa^{-1}\bu_d +\nabla p_d =\bbf_d,~~ \text{in} ~ \Omega_d,\label{DarcyEquation1}\\
	\nabla \cdot \bu_d =g_d,~~ \text{in} ~ \Omega_d,\label{DarcyEquation2}\\
	\bu_d \cdot \bn_d = 0,~~ \text{on} ~ \Gamma_d,\label{DarcyEquation3}
\end{eqnarray}
where $\kappa$ is permeability tensor. {\color{black}Denote by $g=(g_s, g_d)$ the source satisfying}
$$
\int_{\Omega_s} g_s  ~{\rm{d}} \Omega_s + \int_{\Omega_d} g_d ~{\rm{d}}\Omega_d =0.
$$
This model is coupled with three interface conditions: the mass conservation condition, the balance of forces, and the Beavers-Joseph-Saffman (BJS) condition.
\begin{eqnarray}
	\bu_s \cdot \bn_s =\bu_d \cdot \bn_s,~~ \text{on} ~\Gamma_{sd},\label{InterfaceCondition1}\\
p_s-{\color{black}2\mu  D(\bu_s)\bn_s \cdot \bn_s} =p_d,~~ \text{on} ~\Gamma_{sd},\label{InterfaceCondition2}\\
	{\color{black}-2D(\bu_s)\bn_s \cdot \bt_s} =\alpha \kappa^{-\frac{1}{2}}\bu_s \cdot \bt_s,~~ \text{on} ~\Gamma_{sd}.\label{InterfaceCondition3}
\end{eqnarray}
Here $\bn_s$ and $\bt_s$ are the unit outward normal vector and the unit tangent vector on the interface $\Gamma_{sd}$, respectively. {\color{black}And} $\alpha$ is an empirical parameter obtained through experiments. Let $\Omega = \Omega_s \cup \Omega_d$ be an open bounded domain in $\mathbb{R}^2$. This model is shown in Figure \ref{figure1}.

 This model simulates the transport of pollutants from rivers into aquifers in environmental science. Moreover, it finds many applications in hydrology, biofluid dynamics and other fields. So far, various numerical methods have been proposed to numerically solve the Stokes-Darcy problem, including the mixed finite element method \cite{ MFEMSD2,MFEMSD1, MFEMSD3,MFEMPR1}, the finite volume method \cite{FVMSD}, the discontinuous Galerkin finite element method \cite{ DGSD2, DGSD3, DGSD4,DGSD}, the virtual element method \cite{VEMSD}, the weak Galerkin finite element method  \cite{WGStokesDrcyWang, WGSD1, WGSD2, WGStokesDrcyPeng2, WGStokesDrcyPeng1}, etc. 

 When using the piecewise linear polynomial combined with the lowest-order Raviart-Thomas (RT) element to discrete the velocity function in the Stokes equations and the Darcy equations, the error of the velocity function is independent of viscosity coefficient and the pressure function in \cite{MFEMPR1}. We call this property of the error as pressure robustness. However, most numerical methods are not pressure-robust. Consequently, if viscosity coefficient is very small or the approximation of the pressure function is inaccurate, the approximation of the velocity function is correspondingly compromised. This non-pressure robustness also occurs when the Stokes equations are solved numerically. To obtain the pressure-robust numerical results, scholars have proposed various methods: the mixed finite element method based on an exact de Rham complex \cite{girault2012finite, PressureRobust2017}, grad-div stabilization \cite{ Graddiv1988, Graddiv2009, Graddiv2009-2, Graddiv2002, Graddiv2004}, appropriate reconstructions of test functions \cite{Reconstruction1, Reconstruction2, Reconstruction3}, etc.


The WG method was first proposed in \cite{wang2013weak} for solving second-order elliptic equation. In contrast to the finite element method, the WG method employs the discontinuous weak function space as the approximate function space and replaces the classical differential operators with the weak differential operators in the variational formulation. In \cite{WGStokes1}, the authors develop a stabilized WG method for the Stokes equations. The errors of velocity function are dependent on the pressure function and viscosity coefficient. To obtain the pressure-robust WG scheme, the authors proposed to construct velocity reconstruction operator with lowest-order divergence-free $CW_0$ element in \cite{PressureRobust2020}. To obtain higher-order accuracy, the definition of the $H(div)$ element on polygonal meshes is introduced in \cite{PressureRobust2021}. Subsequently, arbitrary order WG schemes on polygonal meshes are proposed.

In this paper, we propose an unified pressure-robust WG scheme for the Stokes-Darcy problem. The WG finite element is employed to discrete the velocity functions and pressure functions in the Stokes equations and Darcy equations. We use the $H(div)$ element to construct a velocity reconstruction operator. This operator is employed to modify the test function on the right-hand side, ultimately yielding the pressure-robust WG scheme. We obtain the error between the velocity function and its numerical solution is independent of both the pressure function and viscosity coefficient. And the error of the pressure function only depends on the velocity function and viscosity coefficient. Moreover, the numerical solutions converge optimally to the exact solutions in theoretical analysis and numerical examples.

This paper is mainly divided into the following parts. In Section 2, we give the definitions of the WG spaces and velocity reconstruction operator, and the WG scheme is proposed. Section 3 is devoted to establishing the stability of the numerical scheme. In Section 4, we present the error equations for the velocity and pressure functions in both the free flow region and porous medium region. Subsequently, we derive error estimates in the $H^1$ norm in Section 5. In Section 6, we provide two numerical examples to validate the efficiency of the pressure-robust WG scheme.

\begin{figure}
	\centering
	\setlength{\unitlength}{1bp}%
	\begin{picture}(125.67, 162.35)(0,0)
		\put(0,0){\includegraphics{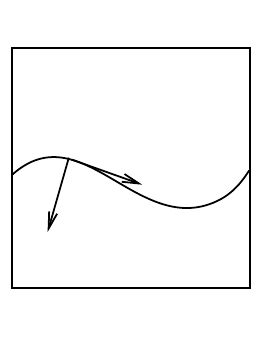}}
		\put(56.87,84.66){\fontsize{14.23}{17.07}\selectfont $\bt_s$}
		\put(32.60,58.61){\fontsize{14.23}{17.07}\selectfont $\bn_s$}
		\put(94.40,104.08){\fontsize{14.23}{17.07}\selectfont $\Omega_s$}
		\put(90.86,42.28){\fontsize{14.23}{17.07}\selectfont $\Omega_d$}
		\put(66.14,145.57){\fontsize{14.23}{17.07}\selectfont $\Gamma_{s}$}
		\put(58.20,8.73){\fontsize{14.23}{17.07}\selectfont $\Gamma_{d}$}
		\put(13.17,93.93){\fontsize{14.23}{17.07}\selectfont $\Gamma_{sd}$}
	\end{picture}%
	\caption{The Stokes-Darcy Model}
	\label{figure1}
\end{figure}

\section{The Weak Galerkin Finite Element Scheme}
In this section, we introduce the definitions of some WG spaces and weak differential operators. Next, based on the definition of the velocity reconstruction operator, the pressure-robust WG scheme is presented. 

\subsection{Weak Galerkin finite element spaces}
Denote by $\mathcal{T}_{s,h}$ and $\mathcal{T}_{d,h}$ the simplicial partitions in the free flow region $\Omega_s$ and porous medium region $\Omega_d$, respectively, and set $\mathcal{T}_h=\mathcal{T}_{s,h} \cup \mathcal{T}_{d,h}$. For $T \in \mathcal{T}_h$, define the area and the diameter of $T$ as $|T|$ and $h_T$, respectively. Let $h=\max_{T \in \mathcal{T}_h} h_T$ be the mesh size. Let $\mathcal{E}_{s,h}$ and $\mathcal{E}_{d,h}$ be the set of all edges in the Stokes domain $\Omega_s$ and Darcy domain $\Omega_d$, respectively. And set $\mathcal{E}_h=\mathcal{E}_{s,h} \cup \mathcal{E}_{d,h}$. Denote by $\mathcal{E}_{sd,h}$ all edges on the interface $\Gamma_{sd}$.

Define the weak function as $\bv=\{\bv_0, \bv_b\}$, where $\bv_0$ and $\bv_b$ are interior function and boundary function, respectively. It should be noted that $\bv_b$ has one unique value on the edges of $\mathcal{E}_h \setminus \mathcal{E}_{sd,h}$ and has two values $\bv_{s,b}$ and $\bv_{d,b}$ on the edge of $\mathcal{E}_{sd,h}$. And there's no relationship between $\bv_0$ and $\bv_b$. Let $V_{s,h}$ and $V_{d,h}$ be the WG spaces  in the free flow region $\Omega_s$ and porous medium region $\Omega_d$, respectively. Set $V_h=V_{s,h} \cup V_{d,h}$.  Define the WG space for pressure function in the domain $\Omega$ as $W_h$. For a given integer $k \geqslant 1$,
\begin{align*}
	V_{s,h}=&\{\bv_{s,h}=\{\bv_{s,0},\bv_{s,b}\}, \bv_{s,0}|_{T} \in [P_k(T)]^2 \,\text{ in }\, T \in \mathcal{T}_{s,h},\bv_{s,b}|_e \in [P_{k}(e)]^2 \,\text{ on} \, e \in \mathcal{E}_{s,h}
	\},\\
	V_{d,h}=&\{\bv_{d,h}=\{\bv_{d,0},\bv_{d,b}\}, \bv_{d,0}|_{T} \in [P_k(T)]^2 \, \text{in} \, T \in \mathcal{T}_{d,h},\bv_{d,b}|_e = v_{d,b}\bn_e, \, v_{d,b} \in P_{k}(e)\\
	&\, \text{on} \, e \in \mathcal{E}_{d,h} \setminus \mathcal{E}_{sd,h}, \bv_{d,b} |_e \in [P_{k}(e)]^2 \, \text{on} \, e \in \mathcal{E}_{sd,h}
	\},\\ 
	V_h^0=&\{ \bv_h \in V_h, \bv_{s,b}=\mathbf{0} \, \text{on} \, e \in \mathcal{E}_{s,h} \cap \partial \Omega, \bv_{d,b} \cdot \bn_e =0 \,\text{on} \,  e\in \mathcal{E}_{d,h} \cap \partial \Omega, \bv_{s,b}=\bv_{d,b} \, \text{on} \, e \in \mathcal{E}_{sd,h} \},\\
	W_h=&\{q: q \in L_0^2(\Omega), q|_T \in P_{k}(T)  \,\text{in} \, T \in \mathcal{T}_h\},		
\end{align*}
where $\bn_e$ is the unit outward normal vector on the edge $e$. {\color{black}Denote by $P_{k}(T)$ } the space of polynomials on $T$ with degree no more than $k$. {\color{black}And} $P_k(e)$ represents the the space of polynomials on $e$ with degree no more than $k$.

Then, we define some weak differential operators. 
\begin{definition}\cite{WGStokes1}
	For any $\bv \in V_{s,h}$ and $T \in \mathcal{T}_{s,h}$, {\color{black} its discrete weak gradient operator $\nabla_w \bv \in [P_{k-1}(T)]^{2\times 2}$ satisfies the following equation}: 
	\begin{equation}
		(\nabla_w \bv, \varphi)_T=-(\bv_0,\nabla \cdot \varphi)_T + \langle \bv_b,\varphi \cdot \bn \rangle_{\partial T},~\forall ~\varphi \in [P_{k-1}(T)]^{2\times 2}, \label{weakgradient}
	\end{equation}
	where $\bn$ is the unit outward normal vector on $\partial T$.
\end{definition}

Then, define the discrete weak strain tensor as $D_w(\bv)=(\nabla_w \bv+(\nabla_w \bv)^T)/2$. Similarly, we give the definition of the discrete weak divergence operator.

\begin{definition}\cite{WGStokes1}
	For each $\bv \in V_h$ and $T \in \mathcal{T}_{h}$, {\color{black} its discrete weak divergence operator $\nabla_w \cdot \bv \in P_{k}(T)$ satisfies the following equation:} 
	\begin{equation}
		(\nabla_w \cdot \bv,\tau)_T=-(\bv_0, \nabla \tau)_T + {\color{black}\langle \bv_b \cdot \bn, \tau  \rangle_{\partial T}} , ~ \forall ~ \tau \in P_{k}(T),\label{weakdivergence}
	\end{equation}
	where $\bn$ is the unit outward normal vector on $\partial T$.
\end{definition}

Next, we introduce the following Sobolev spaces\cite{PressureRobust2021.2}:
\begin{eqnarray*}
	H(div, \Omega) &=& \{\bv \in [L^2(\Omega)]^2,\, \nabla \cdot \bv \in  L^2(\Omega)
	\},\\
	H_0(div, \Omega) &=& \{\bv \in H(div, \Omega),\, \bv \cdot \bn|_{\partial \Omega}=0
	\},\\
	RT_{k}(T) &=& [P_{k}(T)]^2 + \bx P_{k}(T), \, \bx \in \mathbb{R}^2,  T \in \mathcal{T}_h.
\end{eqnarray*}
Then we give the definition of the $H(div)$-conforming velocity reconstruction operator $R_T$.
\begin{definition}\label{RT operator}\cite{PressureRobustCDG2021}
	Let $R_T: {\color{black}V_h|_T \rightarrow RT_{k}(T)}$ be the velocity reconstruction operator. For all $\bv_h =\{\bv_0, \bv_b\} \in V_h$, the operator $R_T$ satisfies:
	\begin{eqnarray}
		\int_T R_T(\bv_h) \cdot \bw ~dT& =& \int_T \bv_0 \cdot \bw ~dT,~~\forall \,\bw \in [P_{k-1}(T)]^2, \label{RTEquation1}\\
		\int_e (R_T(\bv_h) \cdot \bn) \phi ~ds &=& \int_e (\bv_b \cdot \bn) \phi ~ds,~~\forall \, \phi \in P_{k}(e),\label{RTEquation2}
	\end{eqnarray}
	where $T \in \mathcal{T}_h$ and $e \in \mathcal{E}_h$.
\end{definition}

\subsection{The numerical scheme}
For $T \in \mathcal{T}_h$, denote by $Q_0$ the $L^2$ projection operator from $[L^2(T)]^2$ onto $[P_k(T)]^2$. For $e \in \mathcal{E}_h$, {\color{black}define $Q_b$ as} the $L^2$ projection operator from $[L^2(e)]^2$ onto $[P_{k}(e)]^2$. Set $Q_h= \{Q_0, Q_b\}$. Next we give some bilinear forms as follows:
\begin{eqnarray*}
	\begin{split}
		s(\bv,\bw)=& \sum_{T \in \mathcal{T}_{s,h}}h_T^{-1}  \langle \bv_{s,0} -\bv_{s,b},\bw_{s,0} -\bw_{s,b} \rangle_{\partial T}\\
		&+\sum_{T \in \mathcal{T}_{d,h}}h_T^{-1}  \langle (\bv_{d,0} -\bv_{d,b}) \cdot \bn_d, (\bw_{s,0} -\bw_{s,b}) \cdot \bn_d \rangle_{\partial T}\\
		a(\bv,\bw)=&\sum_{T \in \mathcal{T}_{s,h}}(2\mu D_w(\bv_{s,h}),D_w(\bw_{s,h}))_T+\sum_{T \in \mathcal{T}_{d,h}}(\mu \kappa^{-1} \bv_{d,0},\bw_{d,0})_T,\\
		a_s(\bv,\bw)=&a(\bv,\bw)+\mu s(\bv,\bw)+\sum_{e \in \mathcal{E}_{sd,h}}  \langle \alpha \mu \kappa^{-\frac{1}{2}} \bu_{s,b} \cdot \bt_s, \bv_{s,b} \cdot \bt_s\rangle_e,\\
		b(\bv,p)=&-\sum_{T \in \mathcal{T}_h}(\nabla_w \cdot \bv,p)_T,
	\end{split}
\end{eqnarray*}
for any $\bv$, $\bw \in V_h^0$.

Based on these bilinear forms, we present the pressure-robust WG scheme in Algorithm \ref{Algorithm1}.
\begin{algorithm}\label{Algorithm1}
	For the Stokes-Darcy problem (\ref{StokesEquation1})-(\ref{InterfaceCondition3}), the weak Galerkin finite element scheme is to find {\color{black}$\mathbf{u}_h \in V_h^0$}, $\bu_{s,b} |_{\Gamma_s}={\bf{0}}$, $\bu_{d,b}|_{\Gamma_d} \cdot \bn_d =0$ and $p_h \in W_h$ to satisfy
	\begin{eqnarray}
		a_s(\bu_h,\bv_h)+b(\bv_h,p_h)&=&(\bbf_s,R_T(\bv_{s,h}))+(\bbf_d,R_T(\bv_{d,h})),\label{WGscheme1}\\
		b(\bu_h,q_h)&=&-(g, q_h) ,\label{WGscheme2}
	\end{eqnarray}
	for all $\bv_h \in V_h^0$ and $q_h \in W_h$.
\end{algorithm}

 Moreover, we introduce the standard WG scheme presented as Algorithm \ref{Algorithm2} for comparison.
\begin{algorithm}\label{Algorithm2}
	For the Stokes-Darcy problem (\ref{StokesEquation1})-(\ref{InterfaceCondition3}), the standard weak Galerkin finite element scheme is to find {\color{black}$\mathbf{u}_h \in V_h^0$},$\bu_{s,b} |_{\Gamma_s}={\bf{0}}$, $\bu_{d,b}|_{\Gamma_d} \cdot \bn_d =0$  and $p_h \in W_h$ to satisfy
	\begin{eqnarray}
		a_s(\bu_h,\bv_h)+b(\bv_h,p_h)&=&(\bbf_s, \bv_{s,0})+(\bbf_d, \bv_{d,0}),\label{WGscheme3}\\
		b(\bu_h,q_h)&=&-(g, q_h),\label{WGscheme4}
	\end{eqnarray}
	for all $\bv_h \in V_h^0$ and $q_h \in W_h$.
\end{algorithm}

\section{Stability and Well-posedness}
In this section, we establish the stability of the WG scheme.
Let $\mathcal{Q}_h$ and $\mathbb{Q}_h$ be the $L^2$ projection operators onto $P_{k}(T)$ and $[P_{k-1}(T)]^{2 \times 2}$, respectively.
\begin{lemma}
	For $\bu \in [H^1(\Omega)]^2$ and $T \in \mathcal{T}_h$, we have the following properties:
	\begin{eqnarray}
		(\nabla_w(Q_h \bu),\varphi)_T&=&(\mathbb{Q}_h(\nabla \bu),\varphi)_T, ~ \forall \, \varphi \in [P_{k-1}(T)]^{2 \times 2},\label{weak gradient exchange 1}\\
		(\nabla_w \cdot (Q_h \bu), \tau)_T&=&(\mathcal{Q}_h(\nabla \cdot \bu),\tau)_T, \, \forall \, \tau \in P_{k}(T).\label{weak divergence exchange 1}
	\end{eqnarray}
\end{lemma}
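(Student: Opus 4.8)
The plan is to prove both identities by a single three-ingredient recipe: unfold the definition of the weak operator acting on $Q_h\bu$, integrate by parts on the continuous side, and then erase the $L^2$ projections by matching polynomial degrees against the available test functions. The only point requiring attention is that each projection can legitimately be dropped, i.e. that whatever it is paired with lies in the polynomial space onto which it projects.

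For (\ref{weak gradient exchange 1}) I would start from the definition (\ref{weakgradient}) with $\bv=Q_h\bu=\{Q_0\bu,Q_b\bu\}$, which gives
\[
(\nabla_w(Q_h\bu),\varphi)_T=-(Q_0\bu,\nabla\cdot\varphi)_T+\langle Q_b\bu,\varphi\cdot\bn\rangle_{\partial T}.
\]
Since $\varphi\in[P_{k-1}(T)]^{2\times 2}$, one has $\nabla\cdot\varphi\in[P_{k-1}(T)]^2\subset[P_k(T)]^2$ and, on each edge, $\varphi\cdot\bn\in[P_{k-1}(e)]^2\subset[P_k(e)]^2$; hence the defining properties of $Q_0$ and $Q_b$ allow me to replace $Q_0\bu$ by $\bu$ and $Q_b\bu$ by $\bu$. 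The right-hand side then becomes precisely the tensor Green's formula $-(\bu,\nabla\cdot\varphi)_T+\langle\bu,\varphi\cdot\bn\rangle_{\partial T}=(\nabla\bu,\varphi)_T$, and a final use of the projection property of $\mathbb{Q}_h$ (valid because $\varphi\in[P_{k-1}(T)]^{2\times 2}$) yields $(\nabla\bu,\varphi)_T=(\mathbb{Q}_h(\nabla\bu),\varphi)_T$.

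The identity (\ref{weak divergence exchange 1}) follows the same pattern. Starting from (\ref{weakdivergence}) with $\bv=Q_h\bu$, for $\tau\in P_k(T)$ we have $\nabla\tau\in[P_{k-1}(T)]^2\subset[P_k(T)]^2$ and $\tau\bn\in[P_k(e)]^2$ on each edge, so again $Q_0$ and $Q_b$ disappear and the boundary pairing $\langle Q_b\bu\cdot\bn,\tau\rangle_{\partial T}$ reduces to $\langle\bu\cdot\bn,\tau\rangle_{\partial T}$. The scalar divergence theorem turns $-(\bu,\nabla\tau)_T+\langle\bu\cdot\bn,\tau\rangle_{\partial T}$ into $(\nabla\cdot\bu,\tau)_T$, and the projection property of $\mathcal{Q}_h$ (valid since $\tau\in P_k(T)$) gives $(\nabla\cdot\bu,\tau)_T=(\mathcal{Q}_h(\nabla\cdot\bu),\tau)_T$.

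No step is genuinely difficult once the integration-by-parts formulas are in hand; the argument is purely algebraic. The closest thing to an obstacle is the degree bookkeeping that justifies erasing each projection, together with the matrix/vector conventions in the tensor Green's identity, namely pairing $\nabla\bu$ with $-\bu\cdot(\nabla\cdot\varphi)$ plus the boundary term $\bu\cdot(\varphi\cdot\bn)$. The two-valued nature of $\bv_b$ on $\mathcal{E}_{sd,h}$ is irrelevant here, since the whole computation is element-local on a fixed $T\in\mathcal{T}_h$.
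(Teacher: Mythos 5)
Your proof is correct and coincides with the paper's intended argument: the paper gives no proof of its own but defers to Lemma 4.2 of \cite{WGStokes1}, whose proof is exactly the three-step recipe you use (unfold the weak-operator definitions at $Q_h\bu$, erase $Q_0$ and $Q_b$ by degree matching, integrate by parts and reinsert $\mathbb{Q}_h$ or $\mathcal{Q}_h$ against the polynomial test function). Your degree bookkeeping is sound --- in fact $\nabla\cdot\varphi$ lies even in $[P_{k-2}(T)]^2$, and $\tau\bn|_e\in[P_k(e)]^2$ holds because the edges of a simplicial mesh are straight --- so nothing further is needed.
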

The proof refers to Lemma 4.2 in \cite{WGStokes1}.

Similarly, for $\varphi \in [P_{k-1}(T)]^{2 \times 2}$, we get
\begin{eqnarray*}
	\begin{split}
		(\nabla_w^T Q_h \bu, \varphi)_T =&(\nabla_w Q_h \bu , \varphi^T)_T\\
		=&-(Q_0 \bu , \nabla \cdot \varphi^T)_T + \langle Q_b \bu ,\varphi^T \bn \rangle_{\partial T}\\
		=&-(\bu, \nabla \cdot \varphi^T)_T+\langle \bu, \varphi^T \bn \rangle_{\partial T}\\
		=&(\nabla \bu, \varphi^T)_T\\
		=&(\mathbb{Q}_h((\nabla \bu)^T), \varphi)_T.
	\end{split}
\end{eqnarray*}
So the discrete weak strain tensor has the same property as the weak gradient operator:
\begin{eqnarray}
	(D_w(Q_h \bu), \varphi)_T=(\mathbb{Q}_h (D(\bu)), \varphi)_T, \quad \forall \, \varphi \in [P_{k-1}(T)]^{2 \times 2}.\label{weakgradientexchange2}
\end{eqnarray} 
Now, we define a semi-norm in the WG space $V_h$ as follows:
\begin{equation}
	\begin{split}
		\3bar\bv\3bar^2=&\sum_{T \in \mathcal{T}_{s,h}}\left(\| D_w(\bv_{s,h})\|_T^2 +\frac{1}{2} h_T^{-1} \|\bv_{s,0}-\bv_{s,b}\|^2_{\partial T}\right)\\
		&+\sum_{T \in \mathcal{T}_{d,h}} \left( \frac{1}{2} \| \kappa^{-\frac{1}{2}} \bv_{d,0} \|_T^2 + \frac{1}{2} h_T^{-1} \| (\bv_{d,0} - \bv_{d,b}) \cdot \bn_e\|^2_{\partial T}\right)\\
		&+\frac{\alpha}{2}\sum_{e \in \mathcal{E}_{sd,h}} \|\kappa^{-\frac{1}{4}} \bv_{s,b} \cdot \bt_s\|_e^2.
	\end{split}
\end{equation}

\begin{lemma}
	$\3bar \cdot \3bar$ provides a norm in $V_h^0$.
\end{lemma}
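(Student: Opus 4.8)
Since $\3bar\cdot\3bar$ is manifestly a semi-norm (each summand is a squared $L^2$-type quantity, and $\kappa$, $\alpha$, $h_T$ are positive), the only property left to verify is definiteness: if $\3bar\bv\3bar=0$ for some $\bv=\{\bv_0,\bv_b\}\in V_h^0$, then $\bv=\mathbf{0}$. The plan is to exploit the vanishing of every term separately and to recover $\bv$ region by region, treating the Stokes part through a rigid-motion argument and the Darcy part directly. First I would record the consequences of $\3bar\bv\3bar=0$: on every $T\in\mathcal{T}_{s,h}$ one has $D_w(\bv_{s,h})=0$ together with $\bv_{s,0}=\bv_{s,b}$ on $\partial T$; on every $T\in\mathcal{T}_{d,h}$ one has $\bv_{d,0}=\mathbf{0}$ (because $\kappa^{-\frac12}$ is positive definite) and $(\bv_{d,0}-\bv_{d,b})\cdot\bn_e=0$ on $\partial T$; and $\bv_{s,b}\cdot\bt_s=0$ on each $e\in\mathcal{E}_{sd,h}$.

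For the Stokes region, the key step is to observe that when $\bv_{s,0}=\bv_{s,b}$ on $\partial T$ the weak gradient (\ref{weakgradient}) collapses, after integration by parts, to the classical gradient: for all $\varphi\in[P_{k-1}(T)]^{2\times2}$ one has $(\nabla_w\bv_{s,h},\varphi)_T=-(\bv_{s,0},\nabla\cdot\varphi)_T+\langle\bv_{s,0},\varphi\cdot\bn\rangle_{\partial T}=(\nabla\bv_{s,0},\varphi)_T$, whence $\nabla_w\bv_{s,h}=\nabla\bv_{s,0}$ and therefore $D_w(\bv_{s,h})=D(\bv_{s,0})$. Thus $D(\bv_{s,0})=0$ on each element, which forces $\bv_{s,0}$ to be an infinitesimal rigid motion there. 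Because $\bv_{s,b}$ is single-valued on the interior edges of $\mathcal{E}_{s,h}$, the identities $\bv_{s,0}=\bv_{s,b}$ glue these elementwise rigid motions into a globally continuous field on the connected domain $\Omega_s$, hence into a single rigid motion. The homogeneous boundary condition $\bv_{s,b}=\mathbf{0}$ on $\mathcal{E}_{s,h}\cap\partial\Omega$ then makes this rigid motion vanish on the non-degenerate boundary piece $\Gamma_s$, so $\bv_{s,0}\equiv\mathbf{0}$ in $\Omega_s$; taking traces yields $\bv_{s,b}=\mathbf{0}$ on every Stokes edge, including its interface values.

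For the Darcy region, $\bv_{d,0}=\mathbf{0}$ is immediate, and the jump term then gives $\bv_{d,b}\cdot\bn_e=\bv_{d,0}\cdot\bn_e=0$ on $\partial T$. On the edges $e\in\mathcal{E}_{d,h}\setminus\mathcal{E}_{sd,h}$ the space $V_{d,h}$ imposes $\bv_{d,b}=v_{d,b}\bn_e$, so $\bv_{d,b}\cdot\bn_e=v_{d,b}=0$ and hence $\bv_{d,b}=\mathbf{0}$ there. On the interface edges $\mathcal{E}_{sd,h}$ the coupling constraint $\bv_{s,b}=\bv_{d,b}$ built into $V_h^0$, combined with $\bv_{s,b}=\mathbf{0}$ obtained above, yields $\bv_{d,b}=\mathbf{0}$. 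Collecting everything gives $\bv_0=\mathbf{0}$ and $\bv_b=\mathbf{0}$, that is $\bv=\mathbf{0}$, which establishes the norm property.

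The main obstacle I anticipate is the Stokes step: one must justify carefully that the weak gradient reduces to the classical gradient under $\bv_{s,0}=\bv_{s,b}$, and then run the rigid-motion and gluing argument (the kernel of $D(\cdot)$ is the three-dimensional space of infinitesimal rigid motions, continuity upgrades the elementwise rigid motions to a single global one, and the homogeneous data on $\Gamma_s$ eliminates it). The Darcy part is comparatively routine once the interface coupling encoded in $V_h^0$ is invoked.
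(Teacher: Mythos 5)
Your proof is correct, but its crucial Stokes step takes a genuinely different route from the paper's. You show that under $\bv_{s,0}=\bv_{s,b}$ the weak gradient collapses to the classical one, $\nabla_w\bv_{s,h}=\nabla\bv_{s,0}$ (both sides lie in $[P_{k-1}(T)]^{2\times 2}$, so the identity in the definition (\ref{weakgradient}) forces equality), whence $D(\bv_{s,0})=0$ elementwise, and you then run a rigid-motion gluing argument that is annihilated by the homogeneous data on $\Gamma_s$. The paper instead invokes the discrete Korn inequality from Lemma 4.1 of \cite{WGStokesDrcyPeng1} (recorded later as (\ref{gradientenergy})) to conclude directly that $\nabla\bv_{s,0}=0$, so $\bv_{s,0}$ is elementwise constant, and it kills the constant through the \emph{interface} rather than $\Gamma_s$: from $(\bv_{d,0}-\bv_{d,b})\cdot\bn_e=0$ and $\bv_{d,0}=0$ it gets $\bv_{d,b}\cdot\bn_e=0$ on $\mathcal{E}_{sd,h}$, and combining the BJS term $\bv_{s,b}\cdot\bt_s=0$ with the coupling $\bv_{s,b}=\bv_{d,b}$ built into $V_h^0$ it obtains $\bv_{s,b}=\bv_{d,b}=0$ on the interface, which propagates zero through $\Omega_s$. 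Your route buys self-containedness --- no external Korn-type lemma, and it reveals that the interface tangential term is not even needed for definiteness --- at the price of two facts you should make explicit: that the kernel of $D$ is the three-dimensional space of affine rigid motions whose restriction to a line segment is injective (this is what lets agreement on a shared edge glue neighboring rigid motions and lets vanishing on a single boundary edge annihilate the global one), and that $\Gamma_s$ contains at least one edge, which holds in this model but is precisely the nondegeneracy the paper's interface-based argument does not need. Conversely, the paper's route is shorter modulo the citation and yields the stronger elementwise conclusion $\nabla\bv_{s,0}=0$ without any symmetrization-kernel considerations. Your Darcy and coupling steps coincide with the paper's.
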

\begin{proof}
     Set $\3bar \bv_h \3bar =0 $ for some $\bv_h \in V_h^0$, according to the definition of $\3bar \cdot \3bar$, we obtain
	\begin{align}
		D_w(\bv_{s,h})=0, \, \text{in} ~ T \in \mathcal{T}_{s,h};
		\bv_{s,0}=\bv_{s,b}, \, \text{on} ~e \in \mathcal{E}_{s,h} \cup \mathcal{E}_{sd,h},\label{norm1}\\
		\bv_{d,0}=0, \, \text{in} ~T \in \mathcal{T}_{d,h};
		(\bv_{d,0} -\bv_{d,b}) \cdot \bn_e =0, \, \text{on} ~e \in \mathcal{E}_{d,h} \cup \mathcal{E}_{sd,h},\label{norm2}\\
		\bv_{s,b} \cdot \bt_s =0, \, \text{on} ~e \in \mathcal{E}_{sd,h}\label{norm3}.
	\end{align}
	
	Based on Eq.(\ref{norm2}), we get $\bv_{d,b}=\bf{0}$ $\text{on} ~ e \in \mathcal{E}_{d,h}$ and $\bv_{d,b} \cdot \bn_e =0$ on $ e \in \mathcal{E}_{sd,h}$. Combining Eq.(\ref{norm3}) with $\bv_{s,b} = \bv_{d,b}$ on $e \in \mathcal{E}_{sd,h}$, we get  $\bv_{s,b} = \bv_{d,b}=0$ on $ e \in \mathcal{E}_{sd,h}$.	
	

	Using the discrete $Korn's$ inequality, Eqs.(\ref{norm1})- (\ref{norm3}) and Lemma 4.1 in  \cite{WGStokesDrcyPeng1}, we get $\nabla \bv_{s,0} =0 $ on $T \in \mathcal{T}_{s,h}$. Therefore, $\bv_{s,0} $ is a constant on $T \in \mathcal{T}_{s,h}$. Combing $\bv_{s,0} = \bv_{s,b}$ on $e \in \mathcal{E}_{s,h} \cap \mathcal{E}_{sd,h}$ with  $\bv_{s,b}=0$ on $e \in \mathcal{T}_s \cap \mathcal{E}_{sd,h}$, we derive $\bv_{s,0} =0$ on $T \in \mathcal{T}_{s,h}$. And it follows from $\bv_{d,0}=0 $ and  $\bv_{d,b}=0$ that $\bv_h =0 $. So $\3bar \cdot \3bar$ is a norm on $V_h^0$. \hfill$\square$
\end{proof}

\begin{lemma}\label{Abounded}
	For any $\bv,\bw \in V_h^0$, we have
	\begin{eqnarray*}
		|a(\bv,\bw)|&\leqslant& \3bar \bv \3bar \cdot \3bar \bw \3bar.
	\end{eqnarray*}
\end{lemma}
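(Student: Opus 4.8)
The plan is to prove this continuity bound by a straightforward application of the Cauchy--Schwarz inequality, treating the Stokes and Darcy contributions to $a(\cdot,\cdot)$ separately and then recombining. First I would split
$a(\bv,\bw)=\sum_{T\in\mathcal{T}_{s,h}}(2\mu D_w(\bv_{s,h}),D_w(\bw_{s,h}))_T+\sum_{T\in\mathcal{T}_{d,h}}(\mu\kappa^{-1}\bv_{d,0},\bw_{d,0})_T$ and bound each local term. On the Stokes elements I would apply Cauchy--Schwarz in the Frobenius inner product on $[L^2(T)]^{2\times2}$ to get $|(2\mu D_w(\bv_{s,h}),D_w(\bw_{s,h}))_T|\leq 2\mu\|D_w(\bv_{s,h})\|_T\|D_w(\bw_{s,h})\|_T$. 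On the Darcy elements, using that the permeability tensor $\kappa$ (hence $\kappa^{-1}$) is symmetric positive definite, I would factor $\kappa^{-1}=(\kappa^{-\frac12})^T\kappa^{-\frac12}$ and write $(\mu\kappa^{-1}\bv_{d,0},\bw_{d,0})_T=\mu(\kappa^{-\frac12}\bv_{d,0},\kappa^{-\frac12}\bw_{d,0})_T$, which Cauchy--Schwarz bounds by $\mu\|\kappa^{-\frac12}\bv_{d,0}\|_T\|\kappa^{-\frac12}\bw_{d,0}\|_T$. The point of this step is that $\|D_w(\bv_{s,h})\|_T$ and $\|\kappa^{-\frac12}\bv_{d,0}\|_T$ are precisely the quantities already present in the definition of $\3bar\cdot\3bar$.

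Next I would sum over all elements and apply the discrete (weighted) Cauchy--Schwarz inequality for sums to separate the $\bv$- and $\bw$-dependence, obtaining $|a(\bv,\bw)|\leq a(\bv,\bv)^{1/2}\,a(\bw,\bw)^{1/2}$; equivalently, one may note directly that $a$ is symmetric and positive semidefinite on $V_h^0$ and invoke the generalized Cauchy--Schwarz inequality for such forms to reach the same intermediate estimate in one stroke. The claim then reduces to showing $a(\bv,\bv)\leq\3bar\bv\3bar^2$ for every $\bv\in V_h^0$. Since $a(\bv,\bv)=\sum_{T\in\mathcal{T}_{s,h}}2\mu\|D_w(\bv_{s,h})\|_T^2+\sum_{T\in\mathcal{T}_{d,h}}\mu\|\kappa^{-\frac12}\bv_{d,0}\|_T^2$, I would compare this termwise with $\3bar\bv\3bar^2$, which contains $\|D_w(\bv_{s,h})\|_T^2$ and $\tfrac12\|\kappa^{-\frac12}\bv_{d,0}\|_T^2$ together with the additional boundary-jump seminorms and the interface tangential term, all of which are nonnegative and hence provide slack.

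I expect the only real technical point to be this last coefficient bookkeeping: one must verify that the viscosity and permeability weights carried by $a(\cdot,\cdot)$, namely $2\mu$ on the Stokes strain term and $\mu$ on the scaled Darcy velocity term, are dominated by the weights $1$ and $\tfrac12$ appearing in $\3bar\cdot\3bar^2$, so that the surplus nonnegative terms in the norm absorb the difference. This is where the specific normalization of $\mu$ enters and where one must keep careful track of the $\kappa^{-\frac12}$ scaling; once these weights are matched the desired inequality $|a(\bv,\bw)|\leq\3bar\bv\3bar\cdot\3bar\bw\3bar$ follows immediately. Everything else is routine Cauchy--Schwarz, so no genuinely hard estimate is needed.
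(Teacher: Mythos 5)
The paper states this lemma without proof, so the only comparison available is with the evidently intended argument, and your route is exactly that: termwise Cauchy--Schwarz (with $\kappa^{-1}=(\kappa^{-\frac12})^T\kappa^{-\frac12}$ on the Darcy part), then the generalized Cauchy--Schwarz inequality for the symmetric positive semidefinite form $a$, giving $|a(\bv,\bw)|\leqslant a(\bv,\bv)^{1/2}a(\bw,\bw)^{1/2}$, and finally a comparison of $a(\bv,\bv)$ with $\3bar\bv\3bar^2$. The gap is in that final step, precisely at the point you flagged and then waved through: the reduction $a(\bv,\bv)\leqslant\3bar\bv\3bar^2$ is false in general. Since $a(\bv,\bv)=\sum_{T\in\mathcal{T}_{s,h}}2\mu\|D_w(\bv_{s,h})\|_T^2+\sum_{T\in\mathcal{T}_{d,h}}\mu\|\kappa^{-\frac12}\bv_{d,0}\|_T^2$, while $\3bar\bv\3bar^2$ carries the weights $1$ and $\frac12$ on these same two quantities, the termwise comparison requires $2\mu\leqslant1$. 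Your suggestion that the ``surplus nonnegative terms'' (the jump and interface seminorms) provide slack cannot work: they are additive, and additive slack cannot absorb a multiplicative weight mismatch. Concretely, take $\bv\in V_h^0$ supported in the interior of $\Omega_s$ with $\bv_{s,b}=\bv_{s,0}|_e$ on every edge and $\bv_{s,0}$ nonconstant, and all Darcy components zero; then every extra term in $\3bar\bv\3bar^2$ vanishes, $a(\bv,\bv)=2\mu\3bar\bv\3bar^2$, and the claimed bound fails whenever $\mu>\frac12$ --- note the paper's own numerical experiments use $\mu=10^3$.

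What your argument actually establishes is $|a(\bv,\bw)|\leqslant2\mu\,\3bar\bv\3bar\cdot\3bar\bw\3bar$, and this $\mu$-weighted version is the one consistent with the rest of the paper: the norm's weights are chosen so that $a_s(\bv,\bv)=2\mu\3bar\bv\3bar^2$ identically, which is exactly the coercivity invoked in the energy-norm error estimate (``$2\mu\3bar\be_h\3bar^2\leqslant\mu Ch^k\|\bu\|_{k+1}\3bar\be_h\3bar$''), where the factor $\mu$ cancels against the $\mu$ in the consistency bounds of Lemma \ref{H1 estimates}. So the statement as printed omits the factor $2\mu$ (or tacitly assumes a normalization $\mu\leqslant\frac12$); your proof becomes correct once you carry that factor through the final comparison instead of asserting the unweighted inequality $a(\bv,\bv)\leqslant\3bar\bv\3bar^2$.
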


\begin{lemma}(Inf-Sup Condition)\label{InfSupCondition}
	There is a positive constant $C$ such that
	\begin{eqnarray}\label{InfSupinequlity}
		\sup_{\bv \in V_h^0}\frac{b(\bv,q)}{\3bar \bv \3bar} \geqslant C \| q\|, \, q \in W_h,
	\end{eqnarray}
	where $C$ is independent of the mesh size $h$.
\end{lemma}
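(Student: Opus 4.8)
## Proof Proposal for the Inf-Sup Condition

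The plan is to establish the discrete inf-sup condition by a Fortin-operator argument, using the continuous inf-sup condition on $\Omega$ together with the commuting relation \eqref{weak divergence exchange 1} to transfer divergence information to the discrete level. Concretely, I would invoke the classical inf-sup property of the divergence on the full domain: for any $q \in W_h \subset L_0^2(\Omega)$ there exists $\bu \in [H_0^1(\Omega)]^2$ with $\di \bu = -q$ and $\|\bu\|_{1,\Omega} \leq C_0\|q\|$, where $C_0$ is independent of $h$. Because this $\bu$ vanishes on all of $\partial\Omega$, it automatically satisfies the no-penetration condition on $\Gamma_d$ and, being globally $H^1$, is continuous across $\Gamma_{sd}$; this extra regularity is precisely what allows its projection to land in $V_h^0$.

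Next I would define a Fortin operator $\Pi_h\bu \in V_h^0$ by taking the interior value $Q_0\bu$ on every element, the boundary value $Q_b\bu$ on the Stokes edges and on the interface edges $\mathcal{E}_{sd,h}$, and the normal-only boundary value $(Q_b^{P_k}(\bu\cdot\bn_e))\,\bn_e$ on the interior Darcy edges, so as to respect the structure $\bv_{d,b}=v_{d,b}\bn_e$ of $V_{d,h}$. The key observation is that the weak divergence \eqref{weakdivergence} sees only $\bv_b\cdot\bn$, and on a Darcy edge this normal trace equals $Q_b^{P_k}(\bu\cdot\bn_e)$ whether or not the tangential component is retained; hence the commuting property \eqref{weak divergence exchange 1} remains valid for $\Pi_h$. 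Since $q|_T\in P_k(T)$ and $\mathcal{Q}_h$ projects onto $P_k(T)$, this gives
\begin{equation*}
b(\Pi_h\bu, q) = -\sum_{T}(\dw(\Pi_h\bu), q)_T = -\sum_{T}(\mathcal{Q}_h(\di\bu), q)_T = -(\di\bu, q) = \|q\|^2.
\end{equation*}

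The remaining and principal step is the stability bound $\3bar \Pi_h\bu\3bar \leq C_1\|\bu\|_{1,\Omega}$, which I would prove term by term against the definition of $\3bar\cdot\3bar$. For the Stokes strain contribution I would use \eqref{weakgradientexchange2} and the $L^2$-boundedness of $\mathbb{Q}_h$; for each jump term $h_T^{-1}\|\bv_0-\bv_b\|_{\partial T}^2$ I would insert $\bu$ and apply the trace inequality together with the approximation estimates of $Q_0$ and $Q_b$ to absorb the factor $h_T^{-1}$; the Darcy interior, Darcy jump, and interface-tangential terms follow analogously using the boundedness of the weights $\kappa^{-1/2}$ and $\kappa^{-1/4}$. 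Combining this with the identity above yields
\begin{equation*}
\sup_{\bv \in V_h^0}\frac{b(\bv,q)}{\3bar\bv\3bar} \geq \frac{b(\Pi_h\bu,q)}{\3bar\Pi_h\bu\3bar} \geq \frac{\|q\|^2}{C_1\|\bu\|_{1,\Omega}} \geq \frac{1}{C_0C_1}\|q\|,
\end{equation*}
which is the claim with $C=(C_0C_1)^{-1}$. I expect the main obstacle to be the bookkeeping on the interface edges $\mathcal{E}_{sd,h}$, where two boundary traces coexist and the normal-continuity of $\bu$ must be reconciled with the full trace-continuity enforced by $V_h^0$, while simultaneously verifying that the normal-only Darcy projection is admissible, divergence-consistent, and $\3bar\cdot\3bar$-stable.
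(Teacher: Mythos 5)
Your proposal is correct and takes essentially the same route as the paper: for $q\in W_h$ both pick $\bu\in [H_0^1(\Omega)]^2$ with $-\nabla\cdot\bu=q$ and $\|\bu\|_{1,\Omega}\leqslant C\|q\|$, test with its $L^2$ projection into $V_h^0$ (your Fortin operator $\Pi_h$ is the paper's $\tilde{\bv}=Q_h\bv$), invoke the commuting property \eqref{weak divergence exchange 1} to obtain $b(\cdot,q)=\|q\|^2$, and establish $\3bar\Pi_h\bu\3bar\leqslant C\|\bu\|_{1,\Omega}$ term by term via trace and projection inequalities. Your explicit normal-only projection $(Q_b^{P_k}(\bu\cdot\bn_e))\,\bn_e$ on interior Darcy edges is a detail the paper leaves implicit (it surfaces only in its fourth-term estimate $\|(Q_0\bv-((Q_b\bv)\cdot\bn_e)\bn_e)\cdot\bn_e\|_{\partial T}$), and your justification that the weak divergence sees only $\bv_b\cdot\bn$ is exactly why the paper's argument goes through unchanged.
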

\begin{proof}
	According to \cite{bookfiniteelementmethod1,bookfiniteelementmethod2,bookfiniteelementmethod3,bookfiniteelementmethod5,bookfiniteelementmethod4,WGStokes1}, it is clear that for $q \in W_h$, there is a $\bv \in [H_0^1(\Omega)]^2$ satisfying $- \nabla \cdot \bv = q$ and the following inequality holds true:
	\begin{eqnarray*}
		\| \bv \|_{1,\Omega} \leqslant C \|q\|_{0,\Omega}.
	\end{eqnarray*}
	Now we take $\tilde{\bv} = Q_h \bv$ to prove $\3bar \tilde{\bv} \3bar \leqslant \| \bv \|_{1,\Omega}$. To prove this inequality, we estimate every term in $\3bar \tilde{\bv} \3bar$.
	
	For the first term, according to  Eq.(\ref{weak gradient exchange 1}) and the property of $L^2$ projection operator, we have
	\begin{eqnarray*}
		\| D_w (\tilde{\bv})\|^2_{\Omega_s}=\sum_{T \in \mathcal{T}_{s,h}} \|D_w(Q_h \bv)\|_T^2 \leqslant \sum_{T \in \mathcal{T}_{s,h}} \|\nabla_w Q_h \bv\|_T ^2 = \sum_{T \in \mathcal{T}_{s,h}} \|\mathbb{Q}_h(\nabla \bv)\|_T^2 \leqslant C \|\bv\|^2_{1,\Omega_s}.
	\end{eqnarray*}
	
	For the second term, based on the triangle inequality, the trace inequality, and the projection inequality, we get
	\begin{eqnarray*}
		\begin{split}
			&\sum_{T \in \mathcal{T}_{s,h}} h_T^{-1} \|\tilde{\bv}_{s,0} - \tilde{\bv}_{s,b}\|_{\partial T}^2 \\
			=&\sum_{T \in \mathcal{T}_{s,h}} h_T^{-1} \| Q_0 \bv -Q_b \bv\|_{\partial T}^2\\
			\leqslant & 2 \sum_{T \in \mathcal{T}_{s,h}} h_T^{-1} \|Q_0 \bv-\bv\|_{\partial T}^2  +h_T^{-1} \|Q_b \bv-\bv\|_{\partial T}^2 \\
			\leqslant & C \|\bv\|^2_{1,\Omega_s}. 
		\end{split}
	\end{eqnarray*}
	
	Similarly, for the third term, we obtain 
	\begin{eqnarray*}
		\|\kappa^{-\frac{1}{2}} Q_0 \bv\|_{\Omega_d}^2 =\sum_{T \in \mathcal{T}_{d,h}} \|\kappa^{-\frac{1}{2}} Q_0 \bv \|_T^2 \leqslant C \sum_{T \in \mathcal{T}_{d,h}} \|\bv\|_T^2  \leqslant C \|\bv\|_{1,\Omega_d}^2.
	\end{eqnarray*}
	
	For the fourth term, it follows from the triangle inequality, the trace inequality and the projection inequality that 
	\begin{eqnarray*}
		\begin{split}
			&\sum_{T \in \mathcal{T}_{d,h}} h_T^{-1} \| (Q_0 \bv -((Q_b \bv) \cdot \bn_e)\bn_e)\cdot \bn_e\|_{\partial T}^2\\
			\leqslant & C \sum_{T \in \mathcal{T}_{d,h}} h_T^{-1} \|Q_0 \bv - Q_b \bv \|_{\partial T}^2\\
			\leqslant & C\|\bv\|_{1,\Omega_d}^2.
		\end{split}
	\end{eqnarray*}
	
	For the fifth term, using the property of $L^2$ projection operator, the trace inequality and the Poincar$\acute{e}$ inequality in $\Omega$, we have
	\begin{eqnarray*}
		\|\kappa^{-\frac{1}{4}} Q_b \bv \cdot \bt_s \|_{\mathcal{E}_{sd,h}}^2 \leqslant C \| Q_b \bv \|_{\mathcal{E}_{sd,h}} ^2 \leqslant C \| \bv \|_{\mathcal{E}_{sd,h}}^2 \leqslant C (h^{-1} \|\bv\|_{\Omega_d}^2+h \|\nabla \bv\|_{\Omega_d}^2 )
		\leqslant C \|\bv\|_{1,\Omega}^2.
	\end{eqnarray*}
	To sum up, $\3bar \tilde{\bv} \3bar$ $\leqslant$ $C  \|\bv\|_{1,\Omega}$.
	Moreover, by Eq.(\ref{weak divergence exchange 1}), we get
	\begin{eqnarray*}
		(\nabla_w \cdot \tilde{\bv}, q)= (\nabla_w \cdot Q_h \bv, q) =(\mathcal{Q}_h (\nabla \cdot \bv), q)=(\nabla \cdot \bv, q)= -\|q\|_{0,\Omega}^2.
	\end{eqnarray*}
	Therefore, the estimate (\ref{InfSupinequlity}) holds true.
	\hfill$\square$
\end{proof}

\begin{lemma}\cite{WGStokesDrcyPeng1}
	For any $\bv_h \in V_h$, we have
	\begin{eqnarray}\label{gradientenergy}
		\sum\limits_{T \in \mathcal{T}_{s,h}} \|\nabla \bv_{s,0}\|_T^2 \leqslant \3bar \bv_h \3bar^2.
	\end{eqnarray}
\end{lemma}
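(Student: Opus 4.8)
The plan is to reduce the claim to a broken (discrete) Korn inequality for the piecewise polynomial interior field $\bv_{s,0}$, after first showing that the classical strain $D(\bv_{s,0})$ is controlled by the weak strain $D_w(\bv_{s,h})$ together with the interior--boundary mismatch $\bv_{s,0}-\bv_{s,b}$. I work element by element and sum at the end; all constants are generic and mesh independent, so the stated inequality is obtained up to absorbing such a constant.

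First I would compare $D(\bv_{s,0})$ with $D_w(\bv_{s,h})$ on a fixed $T\in\mathcal{T}_{s,h}$. Since $\bv_{s,0}\in[P_k(T)]^2$ we have $D(\bv_{s,0})\in[P_{k-1}(T)]^{2\times2}$, so it is an admissible test tensor in the weak gradient identity (\ref{weakgradient}); because it is symmetric, pairing it against $\nabla_w\bv_{s,h}$ picks out $D_w(\bv_{s,h})$. Integrating $\nabla\bv_{s,0}$ by parts on $T$ and subtracting the weak identity gives, for every symmetric $\varphi\in[P_{k-1}(T)]^{2\times2}$,
\begin{equation*}
(D_w(\bv_{s,h})-D(\bv_{s,0}),\varphi)_T=\langle \bv_{s,b}-\bv_{s,0},\varphi\,\bn\rangle_{\partial T}.
\end{equation*}
Choosing $\varphi=D(\bv_{s,0})$, applying the Cauchy--Schwarz inequality, and using the polynomial trace/inverse estimate $\|(D(\bv_{s,0}))\bn\|_{\partial T}\le C h_T^{-1/2}\|D(\bv_{s,0})\|_T$, I obtain after dividing by $\|D(\bv_{s,0})\|_T$,
\begin{equation*}
\|D(\bv_{s,0})\|_T\le \|D_w(\bv_{s,h})\|_T+C h_T^{-1/2}\|\bv_{s,0}-\bv_{s,b}\|_{\partial T}.
\end{equation*}
Squaring and summing over $T\in\mathcal{T}_{s,h}$ bounds $\sum_T\|D(\bv_{s,0})\|_T^2$ by the first group of terms of $\3bar\bv_h\3bar^2$.

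The decisive step is then to pass from the symmetric gradient $D(\bv_{s,0})$ to the full gradient $\nabla\bv_{s,0}$, which is exactly where a Korn-type estimate is unavoidable and which I expect to be the main obstacle. Because $\bv_{s,0}$ is only piecewise polynomial, a broken Korn inequality for piecewise $H^1$ vector fields applies, giving
\begin{equation*}
\sum_{T\in\mathcal{T}_{s,h}}\|\nabla\bv_{s,0}\|_T^2\le C\Big(\sum_{T\in\mathcal{T}_{s,h}}\|D(\bv_{s,0})\|_T^2+\sum_{e}h_e^{-1}\|\ljump \bv_{s,0}\rjump\|_e^2\Big),
\end{equation*}
where the edge sum runs over $\mathcal{E}_{s,h}$ and $\ljump\cdot\rjump$ denotes the jump; the homogeneous condition $\bv_{s,b}=\mathbf{0}$ on $\Gamma_s$ supplies the constraint that removes the rigid-motion kernel and is handled by reading a boundary trace as $\bv_{s,0}-\bv_{s,b}$ there.

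Finally I would dispose of the jump terms by inserting the single-valued boundary field $\bv_{s,b}$: on an interior edge $e$ shared by $T^+$ and $T^-$, $\ljump\bv_{s,0}\rjump=(\bv_{s,0}^+-\bv_{s,b})-(\bv_{s,0}^--\bv_{s,b})$, so $\|\ljump\bv_{s,0}\rjump\|_e^2\le 2\|\bv_{s,0}^+-\bv_{s,b}\|_e^2+2\|\bv_{s,0}^--\bv_{s,b}\|_e^2$, which is controlled by $\sum_T h_T^{-1}\|\bv_{s,0}-\bv_{s,b}\|_{\partial T}^2$; edges on $\Gamma_s$ and on $\Gamma_{sd}$ are treated identically since $\bv_{s,b}$ is defined there. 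Combining the three estimates yields $\sum_{T}\|\nabla\bv_{s,0}\|_T^2\le C\3bar\bv_h\3bar^2$, which is the asserted bound. Apart from the broken Korn inequality, every step is a routine integration by parts together with trace and inverse estimates.
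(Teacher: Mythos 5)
The paper itself contains no proof of this lemma---it is quoted verbatim from \cite{WGStokesDrcyPeng1} (it is the Lemma 4.1 invoked earlier in the norm-property proof)---so your reconstruction can only be measured against the standard argument, which it follows faithfully and, in its mechanics, correctly. The identity $(D_w(\bv_{s,h})-D(\bv_{s,0}),\varphi)_T=\langle \bv_{s,b}-\bv_{s,0},\varphi\,\bn\rangle_{\partial T}$ for symmetric $\varphi\in[P_{k-1}(T)]^{2\times2}$ is right (for symmetric $\varphi$ the pairing with $\nabla_w\bv_{s,h}$ does reduce to the pairing with $D_w(\bv_{s,h})$, since $(\nabla_w\bv,\varphi)=(\nabla_w\bv,\varphi^T)$), the choice $\varphi=D(\bv_{s,0})$ with the polynomial trace/inverse estimate gives the elementwise bound, and inserting the single-valued $\bv_{s,b}$ to dominate the jumps by the stabilizer is exactly the intended use of the $h_T^{-1}\|\bv_{s,0}-\bv_{s,b}\|_{\partial T}^2$ terms. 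One caveat you already flagged: this route can only yield $\sum_T\|\nabla \bv_{s,0}\|_T^2\leqslant C\3bar\bv_h\3bar^2$ with a Korn/trace constant $C>1$, never the constant-one inequality literally printed (note even the stabilizer enters the norm with weight $\tfrac12$); that is a defect of the statement as transcribed here, not of your argument.

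The genuine gap is the hypothesis in your Korn step. The lemma as printed is asserted for all $\bv_h\in V_h$, while your removal of the rigid-motion kernel quietly assumes $\bv_{s,b}=\mathbf{0}$ on $\Gamma_s$, i.e. $\bv_h\in V_h^0$. This is not cosmetic: for $V_h$ the inequality is simply false. Take the Darcy component identically zero and, on $\Omega_s$, let $\bv_{s,0}$ be the rigid rotation $\br(x,y)=(-\omega y,\ \omega x+b)$ with $\bv_{s,b}$ its edge trace (admissible since $k\geqslant1$, and in $V_h$ the two interface values of $\bv_b$ are unconstrained). Then $D_w(\bv_{s,h})=\mathbb{Q}_h D(\br)=0$ and $\bv_{s,0}-\bv_{s,b}=0$ on every $\partial T$, and for a horizontal interface $y=0$ (the geometry of the first numerical example) one has $\br\cdot\bt_s=-\omega y=0$ on $\Gamma_{sd}$, so $\3bar\bv_h\3bar=0$ while $\sum_T\|\nabla\bv_{s,0}\|_T^2=2\omega^2|\Omega_s|>0$. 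So you must state the hypothesis $\bv_h\in V_h^0$ explicitly, and you should also cite the broken Korn inequality in a precise form (Brenner's Korn inequality for piecewise $H^1$ vector fields, which requires a rigid-motion-removing functional such as the trace on $\Gamma_s$; your full-jump penalties dominate the projected jumps appearing in that theorem, so that reduction is harmless). With the hypothesis corrected and the constant $C$ admitted, your proof is complete and is, in substance, the argument the cited reference carries out.
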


Based on the above lemmas, we obtain the following existence and uniqueness of the WG scheme in the Algorithm \ref{Algorithm1}.
\begin{theorem}
	The weak Galerkin finite element scheme (\ref{WGscheme1})-(\ref{WGscheme2}) has a unique solution.
\end{theorem}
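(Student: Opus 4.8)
The plan is to invoke the standard theory for discrete saddle-point systems. Since \eqref{WGscheme1}--\eqref{WGscheme2} is a square linear system over the finite-dimensional space $V_h^0\times W_h$ (we seek $\bu_h,p_h$ and test against the same number of functions $\bv_h,q_h$), existence and uniqueness are equivalent. It therefore suffices to show that the homogeneous problem, obtained by setting $\bbf_s=\bbf_d=\mathbf{0}$ and $g=0$, admits only the trivial solution $(\bu_h,p_h)=(\mathbf{0},0)$.

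First I would test the homogeneous form of \eqref{WGscheme1} with $\bv_h=\bu_h$ and the homogeneous form of \eqref{WGscheme2} with $q_h=p_h$. This yields $a_s(\bu_h,\bu_h)+b(\bu_h,p_h)=0$ together with $b(\bu_h,p_h)=0$; subtracting the second identity from the first eliminates the coupling term and leaves $a_s(\bu_h,\bu_h)=0$. Next I would record the key coercivity identity: evaluated on the diagonal, $a_s(\bu_h,\bu_h)$ reproduces term by term precisely the five contributions defining $\3bar\cdot\3bar^2$ (the weak strain term, the two stabilization jumps, the Darcy $L^2$ term, and the BJS interface term), each carrying the common factor $2\mu$, so that $a_s(\bu_h,\bu_h)=2\mu\,\3bar\bu_h\3bar^2$. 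In particular $a_s$ is coercive on all of $V_h^0$, which is even stronger than the kernel-coercivity required by Brezzi's theory. Hence $a_s(\bu_h,\bu_h)=0$ forces $\3bar\bu_h\3bar=0$, and since $\3bar\cdot\3bar$ is a genuine norm on $V_h^0$ (established above), we conclude $\bu_h=\mathbf{0}$.

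Finally, with $\bu_h=\mathbf{0}$ in hand, the homogeneous equation \eqref{WGscheme1} reduces to $b(\bv_h,p_h)=0$ for every $\bv_h\in V_h^0$. The inf-sup condition of Lemma~\ref{InfSupCondition} then gives $C\|p_h\|\le\sup_{\bv_h\in V_h^0}b(\bv_h,p_h)/\3bar\bv_h\3bar=0$, whence $p_h=0$. This establishes that the homogeneous system has only the zero solution, and the square-system reduction upgrades this uniqueness to full well-posedness for arbitrary right-hand sides.

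Essentially every ingredient is already available, so I do not expect a genuine obstacle. The only point requiring care is the coercivity identity for $a_s$: one must verify that the stabilization, viscous, Darcy and interface terms in $a_s(\bu_h,\bu_h)$ each match the corresponding term of $\3bar\bu_h\3bar^2$ up to the uniform constant $2\mu$, with the normals $\bn_d$, $\bn_e$ and the tangent $\bt_s$ playing their expected roles. Once that identity is recorded, the norm lemma disposes of the velocity and Lemma~\ref{InfSupCondition} disposes of the pressure, closing the argument.
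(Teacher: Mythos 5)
Your proposal is correct and follows exactly the route the paper intends: it states the theorem without a written proof, remarking only that it follows ``based on the above lemmas,'' and your argument assembles precisely those ingredients --- the homogeneous square-system reduction, the identity $a_s(\bv_h,\bv_h)=2\mu\3bar \bv_h\3bar^2$ (which the paper itself uses later when it writes $2\mu\3bar \be_h\3bar^2\leqslant \mu C h^k\|\bu\|_{k+1}\3bar \be_h\3bar$, confirming your coercivity computation including the factors $\tfrac12$ built into the norm), the norm property of $\3bar\cdot\3bar$ on $V_h^0$, and the inf-sup condition of Lemma~\ref{InfSupCondition} to kill the pressure. No gaps; your reading of the evident typos in $s(\cdot,\cdot)$ and $a_s(\cdot,\cdot)$ (where $\bw_{s,\cdot}$ and $\bu_{s,b}$ should be $\bw_{d,\cdot}$ and $\bv_{s,b}$) matches the intended definitions.
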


\section{Error Analysis}In this section, we first give the properties of the operator $R_T$. Based on these properties, we derive the error equations of the velocity function $\bu$ and the pressure function $p$. 
\subsection{The properties of the velocity reconstruction operator}
For the operator $R_T$, we have the following properties.
\begin{lemma}
	For any $\bv_h \in V_h$, we have 
	\begin{equation}
		\nabla \cdot (R_T(\bv_h))=\nabla_w \cdot \bv_h,\label{RTproperty1}
	\end{equation}
	and
	\begin{equation}
		\sum_{T \in \mathcal{T}_h}\|R_T(\bv_h)-\bv_0\|_T^2 \leqslant C h^2 \3bar \bv_h \3bar^2.\label{RTproperty2}
	\end{equation}
\end{lemma}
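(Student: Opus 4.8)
The plan is to treat the two identities separately: I would establish the commuting relation (\ref{RTproperty1}) by a direct integration-by-parts duality argument, and the approximation bound (\ref{RTproperty2}) by a scaling argument on a fixed reference element. For (\ref{RTproperty1}) I first observe that both sides lie in $P_k(T)$, since the divergence of any field in $RT_{k}(T)$ is a polynomial of degree at most $k$ and $\nabla_w\cdot\bv_h\in P_k(T)$ by definition; it therefore suffices to test against an arbitrary $\tau\in P_k(T)$. Integration by parts on $T$ gives $(\nabla\cdot R_T(\bv_h),\tau)_T=-(R_T(\bv_h),\nabla\tau)_T+\langle R_T(\bv_h)\cdot\bn,\tau\rangle_{\partial T}$. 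Because $\nabla\tau\in[P_{k-1}(T)]^2$ and $\tau|_e\in P_k(e)$, the defining relations (\ref{RTEquation1})--(\ref{RTEquation2}) let me replace $R_T(\bv_h)$ by $\bv_0$ in the volume term and by $\bv_b$ in the boundary term. The outcome is precisely the right-hand side of the weak-divergence definition (\ref{weakdivergence}), so $(\nabla\cdot R_T(\bv_h),\tau)_T=(\nabla_w\cdot\bv_h,\tau)_T$ for every $\tau\in P_k(T)$ and (\ref{RTproperty1}) follows.

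For (\ref{RTproperty2}) I set $\bz:=R_T(\bv_h)-\bv_0\in RT_{k}(T)$ and read off its degrees of freedom from the two defining relations. Testing (\ref{RTEquation1}) against $\bw\in[P_{k-1}(T)]^2$ shows that the interior moments of $\bz$ vanish, i.e. $(\bz,\bw)_T=0$, while (\ref{RTEquation2}) gives $\langle\bz\cdot\bn,\phi\rangle_e=\langle(\bv_b-\bv_0)\cdot\bn,\phi\rangle_e$ for all $\phi\in P_k(e)$. Since the normal trace of an $RT_k$ field restricts to $P_k(e)$ on each edge, the second identity means $\bz\cdot\bn|_e$ is exactly the $L^2(e)$-projection of $(\bv_b-\bv_0)\cdot\bn$ onto $P_k(e)$, whence $\|\bz\cdot\bn\|_e\le\|(\bv_b-\bv_0)\cdot\bn\|_e$. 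Crucially, only the normal component of $\bv_b-\bv_0$ enters, which is what makes the estimate compatible with the Darcy part of $\3bar\cdot\3bar$, where only normal jumps are penalized.

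The heart of the argument, and the step I expect to be the main obstacle, is the local estimate $\|\bz\|_T^2\le C\,h_T\,\|(\bv_b-\bv_0)\cdot\bn\|_{\partial T}^2$. I would prove it by mapping $T$ to a fixed reference triangle $\hat T$ via the Piola transform, which preserves $RT_k$ and transforms normal fluxes cleanly. On $\hat T$ the space $RT_{k}(\hat T)$ is finite-dimensional, and a field whose interior moments vanish is controlled by its normal trace; equivalence of norms on this finite-dimensional space then yields $\|\hat\bz\|_{\hat T}^2\le C\|\hat\bz\cdot\hat\bn\|_{\partial\hat T}^2$. Tracking the scaling factors (area $\sim h_T^2$, edge length $\sim h_T$, and the Piola scaling of the normal flux) converts this reference bound into the weighted estimate on $T$. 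The delicate points are choosing the Piola transform rather than a naive change of variables so that the normal-flux degrees of freedom transform without cross terms, and checking that the constant from the reference-element norm equivalence is genuinely independent of $T$ under shape-regularity.

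Finally I would sum the local estimate over $T\in\mathcal{T}_h$ and insert the factor $h_T\le h$, obtaining $\sum_T\|\bz\|_T^2\le C\sum_T h_T^2\,h_T^{-1}\|(\bv_b-\bv_0)\cdot\bn\|_{\partial T}^2\le Ch^2\sum_T h_T^{-1}\|(\bv_b-\bv_0)\cdot\bn\|_{\partial T}^2$. Bounding the last sum by $\3bar\bv_h\3bar^2$—using that the Stokes penalty $h_T^{-1}\|\bv_{s,0}-\bv_{s,b}\|_{\partial T}^2$ dominates its normal part and that the Darcy penalty $h_T^{-1}\|(\bv_{d,0}-\bv_{d,b})\cdot\bn_e\|_{\partial T}^2$ matches term by term—gives (\ref{RTproperty2}).
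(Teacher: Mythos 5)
Your proposal is correct and follows essentially the same route as the paper: the commuting identity is proved by the identical duality/integration-by-parts computation, and the bound (\ref{RTproperty2}) rests on the same observation that $R_T(\bv_h)-\bv_0\in RT_k(T)$ has vanishing interior moments and normal trace controlled by $(\bv_b-\bv_0)\cdot\bn$, followed by the same weighted summation $\sum_T h_T\|(\bv_0-\bv_b)\cdot\bn\|_{\partial T}^2\leqslant Ch^2\3bar\bv_h\3bar^2$. The only difference is that where you prove the local estimate $\|\bz\|_T^2\leqslant Ch_T\|\bz\cdot\bn\|_{\partial T}^2$ from scratch by Piola scaling to a reference element, the paper simply cites this degrees-of-freedom norm equivalence (Example 12.6 in Ern--Guermond), so your argument is a self-contained filling-in of that citation rather than a different method.
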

\begin{proof}
	For $T \in \mathcal{T}_h$ and $q \in P_{k}(T)$, according to integration by parts, the definition of the weak divergence operator and Eqs.(\ref{RTEquation1})-(\ref{RTEquation2}), we have
	\begin{eqnarray*}
		\begin{split}
			(\nabla \cdot (R_T(\bv_h)), q)_T =&-(R_T(\bv_h), \nabla q)_T+ \langle R_T(\bv_h) \cdot \bn, q\rangle_{\partial T}\\
			=&-(\bv_0, \nabla q)_T+ \langle \bv_b, q\bn \rangle_{\partial T}\\
			=&(\nabla_w \cdot \bv_h, q)_T
		\end{split}
	\end{eqnarray*}
	Taking $q=\nabla \cdot  (R_T(\bv_h))-\nabla_w \cdot \bv_h$ to obtain Eq.(\ref{RTproperty1}).
	
	Next for $T \in \mathcal{T}_h$, denote by $\Pi_T$ and $\Pi_T^{k-1}$ the $L^2$ projection operators from $[L^2(T)]^2$ onto $RT_{k}(T)$ and $[P_{k-1}(T)]^2$, respectively. Since $\bv_0 \in [P_k(T)]^2 \subset RT_{k}(T)$, we have $\Pi_T(\bv_0)=\bv_0$. Therefore, by the example 12.6 in \cite{ern_finite_2021}, we obtain
	\begin{eqnarray*}
		\begin{split}
			&\|R_T(\bv_h)-\bv_0\|_T^2=\|R_T(\bv_h)-\Pi_T(\bv_0)\|_T^2\\
			\leqslant& \|\Pi_T^{k-1}(R_T(\bv_h)-\Pi_T(\bv_0))\|_T^2+h_T \sum_{e \in \partial T} \|(R_T(\bv_h)-\Pi_T(\bv_0)) \cdot \bn_e \|_e^2.
		\end{split}
	\end{eqnarray*}
For the first term, according to the property of projection operator and Eq.(\ref{RTEquation1}), we have
\begin{eqnarray*}
	\begin{split}
		&\|\Pi_T^{k-1}(R_T(\bv_h)-\Pi_T(\bv_0))\|_T^2\\
		=&(\Pi_T^{k-1}(R_T(\bv_h)-\bv_0),\Pi_T^{k-1}(R_T(\bv_h)-\bv_0))_T\\
		=&(\Pi_T^{k-1}(R_T(\bv_h)-\bv_0),R_T(\bv_h)-\bv_0)_T\\
		=&0.
	\end{split}
\end{eqnarray*}
For the second term, by the Eqs.(\ref{RTEquation1})-(\ref{RTEquation2}), we get
\begin{eqnarray*}
	\begin{split}
		&\|(R_T(\bv_h)-\Pi_T(\bv_0)) \cdot \bn_e\|_e^2\\
		=&\langle (R_T(\bv_h)-\bv_0) \cdot \bn_e, ((R_T(\bv_h)-\bv_0) \cdot \bn_e \rangle_e\\
		=&\langle \bv_0 \cdot \bn_e, \bv_0 \cdot \bn_e \rangle_e - 2\langle \bv_0 \cdot \bn_e, \bv_b \cdot \bn_e\rangle_e+ \langle \bv_b \cdot \bn_e, \bv_b \cdot \bn_e \rangle_e\\
		=&\|(\bv_0 - \bv_b)\cdot \bn_e \|_e^2.
	\end{split}
\end{eqnarray*}	
To sum up, we have
\begin{eqnarray*}
	\sum_{T \in \mathcal{T}_{h}} \|R_T(\bv_h)-\bv_0\|_T^2 \leqslant C \sum_{T \in \mathcal{T}_{h}} h_T \|(\bv_0 - \bv_b)\cdot \bn_e \|_{\partial T}^2 \leqslant C h^2 \3bar \bv_h \3bar^2.
\end{eqnarray*}	
The proof of the estimate (\ref{RTproperty2}) is complete.
\hfill$\square$

\end{proof}

\subsection{Error equation}
Denote by $\{\bu_h , p_h\}$ the numerical solutions of the WG scheme (\ref{WGscheme1})-(\ref{WGscheme2}). Define $\{\bu_i , p_i\}$ with $i=s,d$ as the solutions of the problem (\ref{StokesEquation1})-(\ref{InterfaceCondition3}). 
The errors of $\bu ~\text{and}\ p$ are defined by
\begin{eqnarray}
	e_h=Q_h \bu - \bu_h, ~ \varepsilon_h=\mathcal{Q}_h p-p_h.
\end{eqnarray}

\begin{lemma}\label{EE}
	For the $(\bu_i, p_i)\in [H^1(\Omega_i)]^2 \times L^2(\Omega_i)$ with $i=s,d$ satisfying the problem (\ref{StokesEquation1})-(\ref{InterfaceCondition3}), we derive
	\begin{eqnarray}\label{uprojectionEquation1}
		\begin{split}
			a(Q_h \bu, \bv)+b(\bv, \mathcal{Q}_h p)
			=&(\bbf_s,R_T(\bv_{s,h}))+(\bbf_d,R_T(\bv_{d,h}))+\varphi_{\bu}(\bv), {\color{black}\forall ~\bv \in V_h^0}
		\end{split}
	\end{eqnarray}
	where
	\begin{eqnarray}
		\varphi_{\bu}(\bv)&=&-\ell_1(\bu_s,\bv_{s,h})-\ell_2(\bu_d,\bv_{d,h})+\ell_3(\bu_s,\bv_{s,h})-\ell_4(\bu_s,\bv_{s,h}),\\
		\ell_1(\bu_s,\bv_{s,h})&=&\sum_{T \in \mathcal{T}_{s,h}}(\nabla \cdot (2 \mu D(\bu_s)),\bv_{s,0}-R_T(\bv_{s,h}))_T,\\
		\ell_2(\bu_d,\bv_{d,h})&=&\sum_{T \in \mathcal{T}_{d,h}}({\color{black}\mu \kappa^{-1}  \bu_d}, R_T(\bv_{d,h})-\bv_{d,0})_T,\\
		\ell_3(\bu_s,\bv_{s,h})&=&\sum_{T \in \mathcal{T}_{s,h}} \langle 
		\bv_{s,0}-\bv_{s,b},2\mu D(\bu_s)\cdot \bn_s-2\mu \mathbb{Q}_h(D(\bu_s))\cdot \bn_s
		\rangle_{\partial T_s},\\
		\ell_4(\bu_s,\bv_{s,h})&=&\sum_{e \in \mathcal{E}_{sd,h}} \langle
		(2 \mu D(\bu_s) \cdot \bn_s) \bn_s, R_T(\bv_{s,h}) \cdot \bn_s - \bv_{s,b} \cdot \bn_s
		\rangle_e.
	\end{eqnarray}
\end{lemma}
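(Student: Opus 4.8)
The plan is to start from the left-hand side $a(Q_h\bu,\bv)$ and rewrite every discrete weak operator as the corresponding classical operator acting on the exact solution, so that the strong equations \eqref{StokesEquation1} and \eqref{DarcyEquation1} and the interface conditions \eqref{InterfaceCondition1}--\eqref{InterfaceCondition3} can be inserted directly. First I would split $a(Q_h\bu,\bv)$ into its Stokes and Darcy contributions. For the Stokes part, the commuting identity \eqref{weakgradientexchange2} gives $D_w(Q_h\bu_s)=\mathbb{Q}_h(D(\bu_s))$; since $D_w(\bv_{s,h})\in[P_{k-1}(T)]^{2\times2}$ the projection may be dropped, and by the symmetry of $D(\bu_s)$ the term equals $(2\mu D(\bu_s),\nabla_w\bv_{s,h})_T$. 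Applying the definition \eqref{weakgradient} of the weak gradient and integrating by parts twice on each element, I would reduce this to $-(\bv_{s,0},\nabla\cdot(2\mu D(\bu_s)))_T+\langle\bv_{s,b},2\mu D(\bu_s)\cdot\bn\rangle_{\partial T}$ plus the boundary remainder $\langle\bv_{s,0}-\bv_{s,b},(2\mu D(\bu_s)-2\mu\mathbb{Q}_h(D(\bu_s)))\cdot\bn\rangle_{\partial T}$, whose sum over elements is exactly $\ell_3$. The Darcy part collapses to $(\mu\kappa^{-1}\bu_d,\bv_{d,0})_{\Omega_d}$ after moving the projection $Q_0$ onto the polynomial factor $\bv_{d,0}$.

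Next I would treat the pressure and the reconstruction operator together. Substituting $\bbf_s=-\nabla\cdot(2\mu D(\bu_s))+\nabla p_s$ and $\bbf_d=\mu\kappa^{-1}\bu_d+\nabla p_d$ into the target right-hand side, the terms $(\nabla p_s,R_T(\bv_{s,h}))_{\Omega_s}+(\nabla p_d,R_T(\bv_{d,h}))_{\Omega_d}$ are integrated by parts elementwise; the key identity \eqref{RTproperty1}, $\nabla\cdot R_T(\bv_h)=\nabla_w\cdot\bv_h$, turns the volume contribution into $-\sum_T(p,\nabla_w\cdot\bv)_T=b(\bv,\mathcal{Q}_h p)$, which cancels the pressure term on the left, and leaves the edge sum $\sum_{T}\langle p,R_T(\bv)\cdot\bn\rangle_{\partial T}$. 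For the remaining volume forces, the defining relations \eqref{RTEquation1}--\eqref{RTEquation2} let me exchange the interior test function $\bv_0$ for $R_T(\bv)$ in the forcing pairings, so that the residuals $(\nabla\cdot(2\mu D(\bu_s)),\bv_{s,0}-R_T(\bv_{s,h}))_T$ and $(\mu\kappa^{-1}\bu_d,R_T(\bv_{d,h})-\bv_{d,0})_T$ appear; these are precisely $\ell_1$ and $\ell_2$.

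What remains is to match the element-boundary integrals $\sum_{T\in\mathcal{T}_{s,h}}\langle\bv_{s,b},2\mu D(\bu_s)\cdot\bn\rangle_{\partial T}$ against the edge sum $\sum_T\langle p,R_T(\bv)\cdot\bn\rangle_{\partial T}$, and this edge accounting is the step I expect to be the main obstacle. On edges interior to either subdomain the contributions cancel: $\bv_b$ is single-valued, the exact traces $2\mu D(\bu_s)\cdot\bn$ and $p$ are continuous, and the $H(\mathrm{div})$-conformity of $R_T$---in fact the sharper identity $R_T(\bv)\cdot\bn=\bv_b\cdot\bn$ on each edge, which follows from \eqref{RTEquation2} since both sides lie in $P_k(e)$---makes the reconstructed normal flux single-valued. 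On the exterior boundaries $\Gamma_s$ and $\Gamma_d$ the terms vanish because $\bv_{s,b}=\mathbf{0}$ and $\bv_{d,b}\cdot\bn_e=0$ in $V_h^0$. Only the interface $\mathcal{E}_{sd,h}$ survives, where I would use $\bv_{s,b}=\bv_{d,b}$ and $\bn_d=-\bn_s$ to combine the Stokes- and Darcy-side pressure contributions into $-\langle p_s-p_d,\bv_{s,b}\cdot\bn_s\rangle_e$. Decomposing the normal stress $2\mu D(\bu_s)\bn_s$ into its $\bn_s$- and $\bt_s$-components, the balance-of-forces condition \eqref{InterfaceCondition2} cancels the normal part against this pressure jump, while the BJS condition \eqref{InterfaceCondition3} accounts for the tangential part; together with the normal-trace identity from \eqref{RTEquation2} on the interface, these reduce the surviving integrals to the interface consistency term $\ell_4$ appearing in $\varphi_{\bu}$. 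The delicacy here is purely in aligning the two-valued interface traces with the normal/tangential splitting; once the bookkeeping is organized edge by edge, collecting $\ell_1,\ell_2,\ell_3,\ell_4$ yields the asserted identity.
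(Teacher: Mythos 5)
Your proposal is correct and is essentially the paper's own consistency computation run in the opposite direction: you unwind $a(Q_h\bu,\bv)$ via the commuting identity (\ref{weakgradientexchange2}) and two integrations by parts to produce $\ell_3$, exchange $\bv_0$ for $R_T(\bv_h)$ through (\ref{RTEquation1})--(\ref{RTEquation2}) to produce $\ell_1$ and $\ell_2$, invoke (\ref{RTproperty1}) to generate $b(\bv,\mathcal{Q}_h p)$, and close the interface bookkeeping with (\ref{InterfaceCondition1})--(\ref{InterfaceCondition3}) --- precisely the ingredients of the paper's $I_1+I_2$ splitting, where the paper instead tests the strong equations against $R_T(\bv_h)$ and works toward the bilinear forms. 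Your explicit observation that $R_T(\bv_h)\cdot\bn=\bv_b\cdot\bn$ pointwise on each edge (both sides lying in $P_k(e)$) is in fact needed, and left implicit in the paper, when the non-polynomial traces $p_s,p_d$ are paired with $R_T(\bv_h)\cdot\bn$ on element boundaries, so your edge accounting is, if anything, slightly more careful at that step.
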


\begin{proof}
	Multiply on two sides of Eq.(\ref{StokesEquation1}) by  $R_T(\bv_{s,h})$ and integrate to obtain
	\begin{eqnarray*}
		(\bbf_s,R_T(\bv_{s,h}))_{\Omega_s}&=&(-\nabla \cdot T(\bu_s,p_s),R_T(\bv_{s,h}))_{\Omega_s}\\
		&=&-(\nabla \cdot T(\bu_s,p_s),\bv_{s,0})_{\Omega_s}+(\nabla \cdot T(\bu_s,p_s),\bv_{s,0}-R_T(\bv_{s,h}))_{\Omega_s}\\
		&=&I_1 + I_2.
	\end{eqnarray*} 	
 For $I_1$, according to integration by parts and the definition of $T(\bu_s,p_s)$, we have
\begin{eqnarray}
	\begin{split}\label{VSEquation1}
		I_1=&\sum_{T \in \mathcal{T}_{s,h}}(-\nabla \cdot T(\bu_s, p_s),\bv_{s,0})_T\\
		=&\sum_{T \in \mathcal{T}_{s,h}}(T(\bu_s,p_s),\nabla \bv_{s,0})_T-\langle T(\bu_s,p_s)\bn_s,\bv_{s,0}
		\rangle_{\partial T}\\
		=&\sum_{T \in \mathcal{T}_{s,h}}(2\mu D(\bu_s),\nabla \bv_{s,0})_T -(\nabla \cdot \bv_{s,0},p_s)_T-\langle 2\mu D(\bu_s) \bn_s, \bv_{s,0}-\bv_{s,b} \rangle_{\partial T}\\
		&+\langle p_s \bn_s ,\bv_{s,0} \rangle_{\partial T}
		-\langle 2\mu D(\bu_s)\bn_s, \bv_{s,b} \rangle_{\mathcal{E}_{sd,h}},
	\end{split}	
\end{eqnarray}
where we have used the fact that {\color{black} $\sum_{e \in \mathcal{E}_{s,h}\setminus \mathcal{E}_{sd,h} } \langle 2 \mu D(\bu_s) \bn_s, \bv_{s,b} \rangle_e =0$}.

Next, it follows from the property of $L^2$ projection operator, integration by parts, the definition of weak gradient operator and Eq.(\ref{weakgradientexchange2}) that
\begin{eqnarray}
	\begin{split}
		&(2\mu D(\bu_s),\nabla \bv_{s,0})_T\\
		=&(2\mu \mathbb{Q}_h(D(\bu_s)),\nabla \bv_{s,0})_T\\
		=&-(\bv_{s,0},\nabla \cdot (2\mu \mathbb{Q}_h(D(\bu_s))))_T
		+\langle 
		\bv_{s,0},2\mu \mathbb{Q}_h(D(\bu_s))  \bn_s
		\rangle_{\partial T}\\
		=&(\nabla_w \bv_{s,h},2\mu \mathbb{Q}_h(D(\bu_s)))_T+\langle 
		\bv_{s,0}-\bv_{s,b},2\mu \mathbb{Q}_h(D(\bu_s)) \bn_s \rangle_{\partial T}\\
		=&(\nabla_w \bv_{s,h},2\mu D_w(Q_h \bu_s))_T+
		\langle 
		\bv_{s,0}-\bv_{s,b},2\mu \mathbb{Q}_h(D(\bu_s))  \bn_s \rangle_{\partial T}.
	\end{split}
\end{eqnarray}

Hence, {\color{black}the first term }$I_1$ can be rewritten as
\begin{eqnarray}
	\begin{split}
		I_1=&\sum_{T \in \mathcal{T}_{s,h}}
		(\nabla_w \bv_{s,h},2\mu D_w(Q_h \bu_s))_T-\ell_3(\bu_s, \bv_{s,h})\\
		&-(\nabla \cdot \bv_{s,0},p_s)_T 
		-\langle 2 \mu D(\bu_s) \bn_s, \bv_{s,b}\rangle_{\mathcal{E}_{sd,h}}+\langle
		p_s \bn_s, \bv_{s,0}
		\rangle_{\partial T}.
	\end{split}
\end{eqnarray}
As to $I_2$, using integration by parts and Eqs.(\ref{RTEquation1})-(\ref{RTEquation2}) that
\begin{eqnarray}
	\begin{split}
		I_2=&(\nabla \cdot T(\bu_s,p_s),\bv_{s,0}-R_T(\bv_{s,h}))_{\Omega_s}\\
		=&\sum_{T \in \mathcal{T}_{s,h}} (\nabla \cdot (2 \mu D(\bu_s)),\bv_{s,0}-R_T(\bv_{s,h}))_T-(\nabla p_s,\bv_{s,0}-R_T(\bv_{s,h}))_T\\
		=&\ell_1(\bu_s,\bv_{s,h})+\sum_{T \in \mathcal{T}_{s,h}} (\nabla \cdot \bv_{s,0},p_s)_T
		-\langle p_s, \bv_{s,0} \cdot \bn_s \rangle_{\partial T}\\
		&+\left(\sum_{T \in
			\mathcal{T}_{s,h}} -(\nabla \cdot R_T(\bv_{s,h}),p_s)_T+\langle R_T(\bv_{s,h}) \cdot \bn_s, p_s \rangle_{\partial T} \right)\\
		=&\ell_1(\bu_s,\bv_{s,h})+\sum_{T \in \mathcal{T}_{s,h}} (\nabla \cdot \bv_{s,0},p_s)_T
		-\langle p_s, \bv_{s,0} \cdot \bn_s \rangle_{\partial T}\\
		&-\sum_{T \in \mathcal{T}_{s,h}} (\nabla_w \cdot \bv_{s,h},p_s)_T+\sum_{T \in \mathcal{T}_{s,h}} \langle  {\color{black}\bv_{s,b} \cdot \bn_s}, p_s  \rangle_{\partial T}.
	\end{split}
\end{eqnarray}
Then by the interface conditions (\ref{InterfaceCondition1})-(\ref{InterfaceCondition3}), we have
\begin{eqnarray}\label{VSEquation2}
	\begin{split}
		I_1+I_2=&\sum_{T \in \mathcal{T}_{s,h}} (2 \mu D_w(Q_h \bu_s),D_w \bv_{s,h})_T-\sum_{T \in \mathcal{T}_{s,h}}(\nabla_w \cdot \bv_{s,h},\pi_h p_s)_T\\
		&+\langle p_d, R_T(\bv_{s,h}) \cdot \bn_s \rangle_{\mathcal{E}_{sd,h}}
		+\langle
		\alpha \mu \kappa^{-\frac{1}{2}}\bu_s \cdot \bt_s, \bv_{s,b} \cdot \bt_s
		\rangle_{\mathcal{E}_{sd,h}}\\
		=&{\color{black}(\bbf_s,R_T(\bv_{s,h}))_{\Omega_s}-\ell_1(\bu_s,\bv_{s,h})+\ell_3(\bu_s,\bv_{s,h})}.
	\end{split}
\end{eqnarray}
In the porous medium region, multiply on two sides of Eq.(\ref{DarcyEquation1}) by $R_T(\bv_{d,h})$  and integrate to get
\begin{eqnarray*}
	\begin{split}
		&(\bbf_d,R_T(\bv_{d,h}))_{\Omega_d}\\
		=&(\mu \kappa^{-1} \bu_d,R_T(\bv_{d,h}))_{\Omega_d}+(\nabla p_d, R_T(\bv_{d,h}))_{\Omega_d}\\
		=&\sum_{T \in \mathcal{T}_{d,h}}(\mu \kappa^{-1} \bu_d,\bv_{d,0})_{T}+\sum_{T \in \mathcal{T}_{d,h}}(\mu \kappa^{-1} \bu_d,R_T(\bv_{d,h})-\bv_{d,0})_T+\sum_{T \in \mathcal{T}_{d,h}}(\nabla p_d, R_T(\bv_{d,h}))_T.
	\end{split}
\end{eqnarray*}
According to the property of $L^2$ projection operator, we have
\begin{eqnarray*}
	(\mu \kappa^{-1} \bu_d,\bv_{d,0})_T=(\mu \kappa^{-1} Q_0 \bu_d,\bv_{d,0})_T,\, \forall \, T \in \mathcal{T}_{d,h}.
\end{eqnarray*}
It follows from integration by parts, Eqs.(\ref{RTEquation1})-(\ref{RTEquation2}), and Eq.(\ref{RTproperty1}) that
\begin{eqnarray*}
	\begin{split}
		&\sum_{T \in \mathcal{T}_{d,h}}(\nabla p_d,R_T(\bv_{d,h}))_{T}\\
		=&\sum_{T \in \mathcal{T}_{d,h}} \left(
		-(\nabla \cdot R_T(\bv_{d,h}),p_d)_T+\langle
		R_T(\bv_{d,h}) \cdot \bn_d, p_d\rangle_{\partial T}
		\right)\\
		=&\sum_{T \in \mathcal{T}_{d,h}} \left(
		-(\nabla_w \cdot \bv_{d,h},{\color{black}\mathcal{Q}_h} p_d)_T+\langle R_T(\bv_{d,h}) \cdot \bn_d,p_d \rangle_{\partial T}
		\right).			
	\end{split}
\end{eqnarray*}
Therefore, we get
\begin{eqnarray}\label{VDEquation1}
	\begin{split}
		&\sum_{T \in \mathcal{T}_{d,h}}\left(  
		(\mu \kappa^{-1} Q_0 \bu_d,\bv_{d,0})_T-(\nabla_w \cdot \bv_{d,h},{\color{black}\mathcal{Q}_h} p_d)_T 
		\right)+\sum_{e \in \mathcal{E}_{sd,h}} \langle {\color{black}\bv_{d,h}\cdot \bn_d}, p_d \rangle_e \\
		=&(\bbf_d, R_T(\bv_{d,h}))_{\Omega_d}-\ell_2(\bu_d,\bv_{d,h}).
	\end{split}
\end{eqnarray}
Adding Eq.(\ref{VSEquation2}) to Eq.(\ref{VDEquation1}) yields
\begin{eqnarray*}
	\begin{split}
		&\sum_{T \in \mathcal{T}_{s,h}} (2 \mu D_w(Q_h \bu_s),D_w \bv_{s,h})_T-\sum_{T \in \mathcal{T}_{s,h}}(\nabla_w \cdot \bv_{s,h},{\color{black}\mathcal{Q}_h} p_s)_T\\
		-&\sum_{T \in \mathcal{T}_{d,h}}(\nabla_w \cdot \bv_{d,h},{\color{black}\mathcal{Q}_h} p_d)_T +\sum_{T \in \mathcal{T}_{d,h}}(\mu \kappa^{-1} Q_0 \bu_d,\bv_{d,0})_T
		+\langle
		\alpha \mu \kappa^{-\frac{1}{2}}\bu_s \cdot \bt_s, \bv_{s,b} \cdot \bt_s
		\rangle_{\Gamma_{sd}}\\
		=&(\bbf_s,R_T(\bv_{s,h}))_{\Omega_s}+(\bbf_d, R_T(\bv_{d,h}))_{\Omega_d}+\varphi_{\bu}(\bv).
	\end{split}
\end{eqnarray*}
The proof of Eq.(\ref{uprojectionEquation1}) is complete.\hfill$\square$
\end{proof}

\begin{lemma}\label{error equation}
For any $\bv \in V_h^0$ and $q \in W_h$, we have the following error equations:
\begin{eqnarray}
	a_s(\be_h,\bv)+b(\bv,\varepsilon_h)&=&\varphi_{\bu}(\bv)+\mu s(Q_h \bu ,\bv),\label{ErrorEquation1}\\
	b(\be_h,q)&=&{\color{black}0}, \label{ErrorEquation2}
\end{eqnarray}
where $\varphi_{\bu}(\bv)$ is the same as the definitions of Lemma \ref{EE}.
\end{lemma}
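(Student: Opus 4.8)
The plan is to obtain both identities by subtracting the pressure-robust scheme (Algorithm \ref{Algorithm1}) from exact/projected counterparts, so that the reconstructed data terms $(\bbf_s,R_T(\bv_{s,h}))$ and $(\bbf_d,R_T(\bv_{d,h}))$ cancel and only consistency residuals survive. I would dispatch the divergence relation (\ref{ErrorEquation2}) first, since it is purely algebraic, and then assemble the momentum relation (\ref{ErrorEquation1}) out of Lemma \ref{EE}.

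For (\ref{ErrorEquation2}) I would split $b(\be_h,q)=b(Q_h\bu,q)-b(\bu_h,q)$ by linearity. The second piece is read off directly from the scheme: equation (\ref{WGscheme2}) gives $b(\bu_h,q)=-(g,q)$. For the first piece I would invoke the commuting property (\ref{weak divergence exchange 1}), so that $(\nabla_w\cdot(Q_h\bu),q)_T=(\mathcal{Q}_h(\nabla\cdot\bu),q)_T=(\nabla\cdot\bu,q)_T$ for every $q\in P_k(T)$; summing over $T\in\mathcal{T}_h$ and inserting the governing constraints $\nabla\cdot\bu_s=g_s$ and $\nabla\cdot\bu_d=g_d$ yields $b(Q_h\bu,q)=-(g,q)$. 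The two contributions cancel, giving $b(\be_h,q)=0$.

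For (\ref{ErrorEquation1}) the starting point is the projected identity of Lemma \ref{EE}, which is expressed through the bilinear form $a$ (velocity-stress plus Darcy mass) rather than the full $a_s$. I would convert $a$ into $a_s$ by adding $\mu s(Q_h\bu,\bv)$ to both sides — this is exactly the stabilization inconsistency carried by $Q_h\bu$, and it is the source of the extra term on the right — while matching the Beavers-Joseph-Saffman interface contribution. This matching is the delicate point: the identity produced inside the proof of Lemma \ref{EE} carries the interface term with the exact trace $\bu_s$, whereas $a_s(Q_h\bu,\bv)$ carries it with the projected trace $Q_b\bu_s$. The two coincide because $\bv_{s,b}\cdot\bt_s\in P_k(e)$ and $Q_b$ is the $L^2$ projection onto $[P_k(e)]^2$, so on each $e\in\mathcal{E}_{sd,h}$ the projection may be removed against the test function. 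After this conversion I reach $a_s(Q_h\bu,\bv)+b(\bv,\mathcal{Q}_h p)=(\bbf_s,R_T(\bv_{s,h}))+(\bbf_d,R_T(\bv_{d,h}))+\varphi_{\bu}(\bv)+\mu s(Q_h\bu,\bv)$.

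Finally I would subtract the momentum equation of the scheme (\ref{WGscheme1}) from the identity just assembled. Linearity of $a_s$ and $b$ in the relevant arguments collapses the left-hand side to $a_s(\be_h,\bv)+b(\bv,\varepsilon_h)$, the reconstructed data terms on the right cancel exactly — which is precisely why $R_T$ must act consistently on both sides — and what remains is $\varphi_{\bu}(\bv)+\mu s(Q_h\bu,\bv)$, establishing (\ref{ErrorEquation1}). I expect the main obstacle to be bookkeeping rather than genuine analysis: correctly accounting for the BJS interface term through the trace projection, and confirming that the stabilization residual is exactly $\mu s(Q_h\bu,\bv)$ with the right sign, since Lemma \ref{EE} silently omits $s$ while the scheme retains it.
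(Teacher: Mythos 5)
Your proposal is correct and follows essentially the same route as the paper: identity (\ref{ErrorEquation2}) via the commuting property (\ref{weak divergence exchange 1}) together with (\ref{WGscheme2}), and identity (\ref{ErrorEquation1}) by adding $\mu s(Q_h \bu,\bv)$ to the conclusion of Lemma \ref{EE} and subtracting (\ref{WGscheme1}). Your explicit handling of the BJS interface term --- replacing the exact trace $\bu_s\cdot\bt_s$ by $Q_b\bu_s\cdot\bt_s$ against the polynomial test trace $\bv_{s,b}\cdot\bt_s$ --- is a bookkeeping step the paper passes over silently, and making it explicit (note it needs $\kappa^{-\frac{1}{2}}$ and $\bt_s$ constant on each interface edge) only strengthens the argument.
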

\begin{proof}
Since the solutions $(\bu, p)$ satisfy the problem (\ref{StokesEquation1})-(\ref{InterfaceCondition3}), according to Lemma \ref{EE}, we have
\begin{align*}
	a(Q_h \bu, \bv)+b(\bv, \mathcal{Q}_h p)
	=(\bbf_s,R_T(\bv_{s,h}))_{\Omega_s}+(\bbf_d, R_T(\bv_{d,h}))_{\Omega_d}+\varphi_{\bu}(\bv).
\end{align*}
Adding $\mu s(Q_h \bu, \bv)$ to two sides of the above equation and subtracting (\ref{WGscheme1}) yields Eq.(\ref{ErrorEquation1}). Next, by Eq.(\ref{weak divergence exchange 1}), for any $q \in W_h$, we get 
\begin{eqnarray}\label{eep6}
	\begin{split}
		-(\nabla_w \cdot (Q_h \bu), q)=&\sum_{T \in \mathcal{T}_{h}} -(\nabla_w \cdot (Q_h \bu), q)_T\\
		=&\sum_{T \in \mathcal{T}_{h}} -(\mathcal{Q}_h(\nabla \cdot \bu) ,q)_T\\
		=&\sum_{T \in \mathcal{T}_{h}} -(\nabla \cdot \bu, q)_T\\
		=&-(g, q).
	\end{split}
\end{eqnarray}
Then, subtracting Eq.(\ref{eep6}) from Eq.(\ref{WGscheme2}), we derive
\begin{eqnarray}
	b(\be_h,q)=0.
\end{eqnarray}
Hence, the proof of the error equations is complete.\hfill$\square$
\end{proof}

\section{Error Estimates in the Energy Norm}
In this section, we obtain the optimal estimates for error  $\be_h$ of the velocity function and the error $\varepsilon_h$ of the pressure function.

\begin{lemma}\label{H1 estimates}
Suppose $\bu_i \in[H^{k+1}(\Omega_i)]^2 $ with $i=s,d$, we have
\begin{eqnarray}
	|\ell_1(\bu_s,\bv_{s,h})|  & \leqslant &\mu C h^k\|\bu_s\|_{k+1, \Omega_s} \3bar \bv_h \3bar,\label{H1 estimates 1}\\
	|\ell_2(\bu_d,\bv_{d,h})| &\leqslant & \mu C h^k \|\bu_d\|_{k+1, \Omega_d}\3bar \bv_h \3bar,\label{H1 estimates 2}\\
	|\ell_3(\bu_s,\bv_{s,h})| &\leqslant & \mu C h^k\|\bu_s\|_{k+1, \Omega_s} \3bar \bv_h \3bar,\label{H1 estimates 3}\\
	|\ell_4(\bu_s,\bv_{s,h})| & \leqslant& \mu Ch^k\|\bu_s\|_{k+1, \Omega_s} \3bar \bv_{h}\3bar,\label{H1 estimates 4}\\
	|s(Q_h \bu ,\bv)| &\leqslant & C h^k(\|\bu_s\|_{k+1, \Omega_s}+\| \bu_d \|_{k+1, \Omega_d}) \3bar \bv_h \3bar,\label{H1 estimates 5}
\end{eqnarray}
where $\bv_{s,h}=\{\bv_{s,0}, \bv_{s,b}\} \in  V_{s,h}$ and $\bv_{d,h}=\{\bv_{d,0}, \bv_{d,b}\} \in  V_{d,h}$.
\end{lemma}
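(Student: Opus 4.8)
The plan is to prove all five bounds with a single template: each $\ell_i$ (and the stabilization $s$) is a pairing of smooth data against a discrete quantity that is orthogonal, in an appropriate sense, either to $[P_{k-1}(T)]^2$ on elements or to $P_k(e)$ on edges. I would first record the two orthogonality facts I intend to exploit repeatedly: by (\ref{RTEquation1}) the element quantity $\bv_0-R_T(\bv_h)$ is $L^2(T)$-orthogonal to $[P_{k-1}(T)]^2$, and by (\ref{RTEquation2}) the edge quantity $(R_T(\bv_h)-\bv_b)\cdot\bn$ is $L^2(e)$-orthogonal to $P_k(e)$. In each estimate I subtract a suitable polynomial projection of the smooth factor (which is annihilated by these orthogonalities), apply Cauchy--Schwarz, bound the smooth factor by a projection/trace estimate, and absorb the remaining discrete factor into $\3bar\bv_h\3bar$ using either (\ref{RTproperty2}) or the jump/stabilization terms already present in $\3bar\cdot\3bar$.

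For the two interior terms $\ell_1$ and $\ell_2$ I would insert the $L^2$ projection onto $[P_{k-1}(T)]^2$ of the smooth factor --- $\nabla\cdot(2\mu D(\bu_s))$ for $\ell_1$ and $\mu\kappa^{-1}\bu_d$ for $\ell_2$ --- which is removed for free by (\ref{RTEquation1}). After Cauchy--Schwarz the discrete factor $\|\bv_0-R_T(\bv_h)\|_T$ is controlled by $Ch\3bar\bv_h\3bar$ through (\ref{RTproperty2}). For $\ell_1$ the key bookkeeping point is that $\nabla\cdot(2\mu D(\bu_s))$ is a \emph{second} derivative of $\bu_s$, hence only lies in $[H^{k-1}(\Omega_s)]^2$; its best $[P_{k-1}]^2$-approximation therefore costs $C\mu h^{k-1}\|\bu_s\|_{k+1,\Omega_s}$, and the extra power $h$ from the reconstruction restores the claimed $h^k$. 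The term $\ell_2$ is analogous (indeed slightly better, since the data carries no derivative), giving (\ref{H1 estimates 2}).

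The two face-jump contributions $\ell_3$ and $s(Q_h\bu,\bv)$ are handled by the standard weighted Cauchy--Schwarz on $\partial T$: I split the pairing as a product of $h_T^{-1/2}\|\bv_{s,0}-\bv_{s,b}\|_{\partial T}$, which sums to $C\3bar\bv_h\3bar$ by the definition of the norm, against an $h_T^{1/2}$-weighted data term. For $\ell_3$ that data term is $2\mu\big(D(\bu_s)-\mathbb{Q}_h(D(\bu_s))\big)\cdot\bn_s$, estimated by the trace inequality followed by the approximation property of $\mathbb{Q}_h$, yielding $C\mu h^k\|\bu_s\|_{k+1,\Omega_s}$. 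For $s(Q_h\bu,\bv)$ I write $Q_0\bu-Q_b\bu=(Q_0\bu-\bu)-(Q_b\bu-\bu)$ on $\partial T$ and bound each piece by trace plus projection, separately on the Stokes part and on the normal Darcy part; no factor of $\mu$ appears, matching (\ref{H1 estimates 5}).

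The interface term $\ell_4$ is the main obstacle and the only one requiring both orthogonalities to cooperate. Because $\bv_{s,b}\cdot\bn_s\in P_k(e)$ and, by (\ref{RTEquation2}), $(R_T(\bv_{s,h})-\bv_{s,b})\cdot\bn_s\perp P_k(e)$, I may replace the normal stress $2\mu D(\bu_s)\bn_s\cdot\bn_s$ by its error against the edge projection $P^k_e$ for free, then apply Cauchy--Schwarz edge-by-edge. The delicate part is balancing half-powers of $h$: the discrete factor $\sum_e\|(R_T(\bv_{s,h})-\bv_{s,b})\cdot\bn_s\|_e^2$ I split into $R_T(\bv_{s,h})-\bv_{s,0}$, bounded via an inverse-trace inequality together with (\ref{RTproperty2}), and $(\bv_{s,0}-\bv_{s,b})\cdot\bn_s$, bounded by the jump term in $\3bar\cdot\3bar$; both scale like $Ch^{1/2}\3bar\bv_h\3bar$. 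The data factor must then furnish $C\mu h^{k-1/2}\|\bu_s\|_{k+1,\Omega_s}$: since $D(\bu_s)$ only lies in $H^k$, I would bound the edge projection error using the trace of the volume projection onto $P_{k-1}(T)$, which gives exactly $h^{k-1/2}$. Multiplying the two factors reproduces the stated $C\mu h^k\|\bu_s\|_{k+1,\Omega_s}\3bar\bv_h\3bar$, and care in this regularity and half-power tracking is where the proof is most likely to go wrong.
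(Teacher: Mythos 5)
Your treatment of $\ell_1$, $\ell_2$, $\ell_3$ and $s(Q_h\bu,\bv)$ is exactly the paper's proof: insert the $L^2$ projection $\Pi_h^{k-1}$ of the data (annihilated through (\ref{RTEquation1}) by the orthogonality of $R_T(\bv_h)-\bv_0$ to $[P_{k-1}(T)]^2$), apply Cauchy--Schwarz, absorb $\|R_T(\bv_h)-\bv_0\|$ via (\ref{RTproperty2}), and handle the edge terms by the weighted Cauchy--Schwarz against the jump contributions in $\3bar\cdot\3bar$ together with trace and projection inequalities; your bookkeeping ($h^{k-1}$ from the $H^{k-1}$ regularity of $\nabla\cdot D(\bu_s)$ times the extra $h$ from the reconstruction, no $\mu$ in (\ref{H1 estimates 5})) matches the paper's line by line.

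The one place you genuinely diverge is $\ell_4$, where the paper in fact presents no argument at all, and your argument, while correct, is more elaborate than necessary. Your chain does close: the discrete factor splits as $\|(R_T(\bv_{s,h})-\bv_{s,0})\cdot\bn_s\|_e+\|(\bv_{s,0}-\bv_{s,b})\cdot\bn_s\|_e$, each summing to $Ch^{1/2}\3bar\bv_h\3bar$ (inverse trace plus (\ref{RTproperty2}) for the first, the Stokes jump term of the norm for the second), and the data factor is $C\mu h^{k-1/2}\|\bu_s\|_{k+1,\Omega_s}$ by comparing with the trace of $\mathbb{Q}_h(D(\bu_s))\in[P_{k-1}(T)]^{2\times 2}$, so the half-powers combine to the stated $\mu Ch^k$. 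But you missed the stronger structural fact: on a straight edge $e$ the quantity $\bx\cdot\bn_e$ is constant, so the normal trace of any element of $RT_k(T)$ lies in $P_k(e)$; since $\bv_{s,b}\cdot\bn_s\in P_k(e)$ as well, condition (\ref{RTEquation2}) forces $R_T(\bv_{s,h})\cdot\bn_s=\bv_{s,b}\cdot\bn_s$ identically on every interface edge, hence $\ell_4(\bu_s,\bv_{s,h})\equiv 0$ and (\ref{H1 estimates 4}) is trivial. Your orthogonality-plus-half-power route buys robustness (it would survive a reconstruction satisfying (\ref{RTEquation2}) only in the moment sense, e.g.\ on curved interfaces or with an enriched boundary space), at the cost of obscuring that in the present setting the interface consistency error simply vanishes.
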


\begin{proof}
For the estimate (\ref{H1 estimates 1}), according to the Cauchy-Schwarz inequality, Eq.(\ref{RTEquation1}), and the projection inequality, we get 
\begin{eqnarray}
	\begin{split}
		|\ell_1(\bu_s, \bv_{s,h})|=&\left|\sum_{T \in \mathcal{T}_{s,h}} (\nabla \cdot (2 \mu D(\bu_s)),R_T(\bv_{s,h})-\bv_{s,0})_T\right|\\
		=&2\mu \left|\sum_{T \in \mathcal{T}_{s,h}} (\nabla \cdot D(\bu_s)-\Pi_h^{k-1}(\nabla \cdot D(\bu_s)),R_T(\bv_{s,h})-\bv_{s,0})_T\right|\\
		\leqslant & 2\mu \left( 
		\sum_{T \in \mathcal{T}_{s,h}} \| \nabla \cdot D(\bu_s)- \Pi_h^{k-1}(\nabla \cdot D(\bu_s))\|_T^2
		\right)^{\frac{1}{2}}\\
		&\left(
		\sum_{T \in \mathcal{T}_{s,h}} \| R_T(\bv_{s,h})-\bv_{s,0}\|_T^2
		\right)^{\frac{1}{2}}\\
		\leqslant&\mu C h^k \|\bu_s\|_{k+1,\Omega_s} \3bar \bv_h \3bar,
	\end{split}
\end{eqnarray}
where we have used the fact that $\sum_{T \in \mathcal{T}_{s,h}} \left(\Pi_h^{k-1}(\nabla \cdot D(\bu_s)), R_T(\bv_{s,h})-\bv_{s,0} \right)_T=0$.

Similarly, we have
\begin{eqnarray}
	\begin{split}
		|\ell_2(\bu_d,\bv_{d,h})|=&\left|\sum_{T \in \mathcal{T}_{d,h}}(\mu \kappa^{-1}\bu_d ,R_T(\bv_{d,h})-\bv_{d,0})_T\right|\\
		=&\left|\sum_{T \in \mathcal{T}_{d,h}}\mu \kappa^{-1}(\bu_d-\Pi_h^{k-1}\bu_d ,R_T(\bv_{d,h})-\bv_{d,0})_T\right|\\
		\leqslant &\mu C \left( 
		\sum_{T \in \mathcal{T}_{d,h}} \| \bu_d- \Pi_h^{k-1} \bu_d\|_T^2
		\right)^{\frac{1}{2}}
		\left(
		\sum_{T \in \mathcal{T}_{d,h}} \| R_T(\bv_{d,h})-\bv_{d,0}\|_T^2
		\right)^{\frac{1}{2}}\\
		\leqslant &  \mu C h^k \|\bu_d\|_{k+1,\Omega_d} \3bar \bv_h\3bar.
	\end{split}
\end{eqnarray}
As to the estimate (\ref{H1 estimates 3}), based on the Cauchy-Schwarz inequality, the trace inequality and the projection inequality, we obtain
\begin{eqnarray}\label{HEP1}
	\begin{split}
		|\ell_3(\bu,\bv)|=&\left|\sum_{T \in \mathcal{T}_{s,h}} \langle \bv_{s,0} -\bv_{s,b}, 2\mu D(\bu_s) \cdot \bn_s - 2\mu \mathbb{Q}_h (D(\bu_s))\cdot \bn_s \rangle_{\partial T}\right|\\
		\leqslant& 2 \mu \sum_{T \in \mathcal{T}_{s,h}} \| \bv_{s,0} -\bv_{s,b}  \|_{\partial T} \| D(\bu_s)  - \mathbb{Q}_h (D(\bu_s))\|_{\partial T}\\
		\leqslant& \mu C \left( \sum_{T \in \mathcal{T}_{s,h}}  \| \bv_{s,0} -\bv_{s,b}  \|^2_{\partial T} \right)^{\frac{1}{2}} \left( \sum_{T \in \mathcal{T}_{s,h}} \| D(\bu_s)- \mathbb{Q}_h(D(\bu_s)) \|^2_{\partial T}  \right)^{\frac{1}{2}}\\
		\leqslant & \mu Ch^{k} \|\bu_s\|_{k+1,\Omega_s} \3bar \bv_h \3bar.
	\end{split}
\end{eqnarray}
In a similar way, we derive
\begin{eqnarray}\label{HEP5}
	\begin{split}
		&|s(Q_h \bu,\bv_h)|\\
		=&\Big|\sum_{T \in \mathcal{T}_{s,h}} h_T^{-1} \langle Q_0 \bu_s- Q_b \bu_s
		,\bv_{s,0} -\bv_{s,b} \rangle_{\partial T} \\
		&+\sum_{T \in \mathcal{T}_{d,h}} h_T^{-1} \langle
		(Q_0 \bu_d - ((Q_b \bu_d)\cdot \bn_e)\bn_e) \cdot \bn_e,(\bv_{d,0} - \bv_{d,b}) \cdot \bn_e
		\rangle_{\partial T}\Big|\\
		\leqslant & \sum_{T \in \mathcal{T}_{s,h}} h_T^{-1} \|Q_0 \bu_s -\bu_s\|_{\partial T}
		\|\bv_{s,0} -\bv_{s,b}\|_{\partial T} \\
		&+\sum_{T \in \mathcal{T}_{d,h}} h_T^{-1} \| Q_0 \bu_d - Q_b \bu_d\|_{\partial T} \|(\bv_{d,0} -\bv_{d,b}) \cdot \bn_e \|_{\partial T}\\
		\leqslant & \left( \sum_{T \in \mathcal{T}_{s,h}} h_T^{-1} \|Q_0 \bu_s - \bu_s\|_{\partial T}^2+\sum_{T \in \mathcal{T}_{d,h}} h_T^{-1} \|Q_0 \bu_d -Q_b \bu_d\|_{\partial T}^2
		\right)^{\frac{1}{2}} \\
		&\left(
		\sum_{T \in \mathcal{T}_{s,h}} h_T^{-1} \|\bv_{s,0}-\bv_{s,b}\|_{\partial T}^2+\sum_{T \in \mathcal{T}_{d,h}} h_T^{-1} \| (\bv_{d,0} -\bv_{d,b}) \cdot \bn_e\|_{\partial T}
		\right)^{\frac{1}{2}}\\
		\leqslant & Ch^k (\|\bu_s\|_{k+1,\Omega_s}+\|\bu_d\|_{k+1,\Omega_d})\3bar  \bv_h \3bar.
	\end{split}
\end{eqnarray}
The proof of the above lemma is complete.\hfill$\square$
\end{proof}

\begin{theorem}
Let $\bu_i\in [H^{k+1}(\Omega_i)]^2$ with $i=s,d$ satisfy problem (\ref{StokesEquation1})-(\ref{InterfaceCondition3}). Then the errors $\be_h$ and $\varepsilon_h$ have the following estimates:
\begin{eqnarray}
	\3bar \be_h \3bar \leqslant C h^k \|\bu\|_{k+1},\label{ErrorE1}\\
	\|\varepsilon_h\| \leqslant \mu C h^k \|\bu\|_{k+1}.\label{ErrorE2}
\end{eqnarray}
\end{theorem}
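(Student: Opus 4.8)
The plan is to establish the two bounds in order, obtaining the velocity estimate \eqref{ErrorE1} by testing the error equation against the velocity error itself, and then the pressure estimate \eqref{ErrorE2} through the inf-sup condition. Both arguments start from Lemma \ref{error equation}, namely $a_s(\be_h,\bv)+b(\bv,\varepsilon_h)=\varphi_{\bu}(\bv)+\mu s(Q_h\bu,\bv)$ together with $b(\be_h,q)=0$. A preliminary ingredient is the coercivity of $a_s$: writing out its three pieces and comparing them term by term with $\3bar\cdot\3bar^2$ shows that each of the five contributions appears in $a_s(\bv,\bv)$ with exactly twice the weight it carries in $\3bar\bv\3bar^2$, so that $a_s(\bv,\bv)=2\mu\3bar\bv\3bar^2$.

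For \eqref{ErrorE1} I would take $\bv=\be_h$ in the first error equation. Since $\varepsilon_h\in W_h$, the second error equation gives $b(\be_h,\varepsilon_h)=0$, and the coercivity just recorded leaves
$$2\mu\3bar\be_h\3bar^2=\varphi_{\bu}(\be_h)+\mu s(Q_h\bu,\be_h).$$
The right-hand side is controlled by Lemma \ref{H1 estimates}: recalling $\varphi_{\bu}=-\ell_1-\ell_2+\ell_3-\ell_4$, each of $|\ell_1|,\dots,|\ell_4|$ and the term $\mu|s(Q_h\bu,\be_h)|$ is bounded by $\mu Ch^k\|\bu\|_{k+1}\3bar\be_h\3bar$. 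Summing and dividing by $2\mu\3bar\be_h\3bar$ yields \eqref{ErrorE1}. The crucial observation is that $\mu$ is a common factor on both sides and therefore cancels; this is exactly the pressure-robustness, and it is made possible by the reconstruction operator $R_T$, whose property \eqref{RTproperty1} removes every pressure contribution from $\varphi_{\bu}$ and leaves only consistency terms that each scale like $\mu$.

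For \eqref{ErrorE2} I would invoke the inf-sup condition of Lemma \ref{InfSupCondition}, which gives $C\|\varepsilon_h\|\le\sup_{\bv\in V_h^0}b(\bv,\varepsilon_h)/\3bar\bv\3bar$. Rearranging the first error equation gives $b(\bv,\varepsilon_h)=\varphi_{\bu}(\bv)+\mu s(Q_h\bu,\bv)-a_s(\be_h,\bv)$, so it suffices to bound each of the three terms by $\mu Ch^k\|\bu\|_{k+1}\3bar\bv\3bar$. The first two are again handled by Lemma \ref{H1 estimates}. For the last, a term-by-term Cauchy-Schwarz estimate of $a_s$ (the continuity of $a_s$, via Lemma \ref{Abounded} for the $a$-part and Cauchy-Schwarz for the stabilization and interface parts), together with the fact that every term of $a_s$ carries a factor $\mu$, gives $|a_s(\be_h,\bv)|\le\mu C\3bar\be_h\3bar\3bar\bv\3bar$; inserting the velocity bound \eqref{ErrorE1} just proved turns this into $\mu Ch^k\|\bu\|_{k+1}\3bar\bv\3bar$. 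Dividing by $\3bar\bv\3bar$, taking the supremum, and applying the inf-sup inequality then produces \eqref{ErrorE2}.

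The genuinely routine parts are the coercivity identity and the Cauchy-Schwarz bounds for the stabilization and interface terms of $a_s$, both of which are immediate once the norm is written out. The point requiring the most care---and the conceptual core of the argument---is the bookkeeping of the viscosity factor: one must check that $\mu$ factors out completely in the velocity estimate so that it cancels, while in the pressure estimate it survives as a genuine prefactor because the bound on $a_s(\be_h,\bv)$ is itself proportional to $\mu$. This asymmetry is precisely what produces a velocity error independent of $\mu$ and a pressure error of order $\mu h^k$.
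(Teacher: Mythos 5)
Your proposal is correct and follows essentially the same route as the paper: take $\bv=\be_h$ and $q=\varepsilon_h$ in the error equations, use $b(\be_h,\varepsilon_h)=0$ together with the identity $a_s(\be_h,\be_h)=2\mu\3bar\be_h\3bar^2$ so that Lemma \ref{H1 estimates} yields $2\mu\3bar\be_h\3bar^2\leqslant \mu Ch^k\|\bu\|_{k+1}\3bar\be_h\3bar$ with the viscosity cancelling, and then derive the pressure bound from the inf-sup condition after rearranging the first error equation and bounding $|a_s(\be_h,\bv)|\leqslant \mu C\3bar\be_h\3bar\,\3bar\bv\3bar$ via the velocity estimate. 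Your explicit write-outs of the coercivity identity and of the $\mu$-weighted continuity of $a_s$ are exactly the steps the paper leaves implicit, and both are correct.
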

\begin{proof}
Taking $\bv= \be_h$ in Eq.(\ref{ErrorEquation1}) and $q= \varepsilon_h$ in Eq.(\ref{ErrorEquation2}) leads to
\begin{align}
	     a_s(\be_h,\be_h)+b(\be_h,\varepsilon_h)
		=&\varphi_{\bu}(\be_h)+\mu s(Q_h \bu ,\be_h),\label{ErrorE3}\\
		b(\be_h,\varepsilon_h)=&0.\label{ErrorE4}
\end{align}
Substituting Eq.(\ref{ErrorE4}) to Eq.(\ref{ErrorE3}) yields
\begin{eqnarray*}
	a_s(\be_h,\be_h)=\varphi_{\bu}(\be_h)+\mu s(Q_h \bu ,\be_h).
\end{eqnarray*}
By Lemma \ref{H1 estimates}, we have $2\mu \3bar \be_h \3bar^2 \leqslant \mu C h^k \|\bu\|_{k+1} \3bar \be_{h} \3bar$. Therefore, the estimate (\ref{ErrorE1}) holds true. 

Next, for $\varepsilon_h \in W_h$, according to the inf-sup condition (\ref{InfSupinequlity}) and error equations (\ref{ErrorEquation1})-(\ref{ErrorEquation2}),  we have 
\begin{eqnarray*}
	\begin{split}
		C\|\varepsilon_h\| \leqslant& \sup\limits_{\bv \in V_h} \frac{b(\bv, \varepsilon_h)}{\3bar \bv \3bar}\\ \leqslant& \left|\sup\limits_{\bv \in V_h}
		\frac{a_s(\be_h, \bv)-\varphi_{\bu}(\bv)-
			\mu s(Q_h \bu ,\bv)}{\3bar \bv \3bar}\right| \\
		\leqslant& \mu Ch^k \| \bu \|_{k+1}
	\end{split}
\end{eqnarray*}
Hence the proof of the estimation (\ref{ErrorE2}) is complete.\hfill$\square$
\end{proof}

\section{Numerical Results} In this section, we give two numerical examples to validate the efficiency of the proposed WG scheme. First, we take a fixed coefficient $ \mu $ and different mesh size to demonstrate the orders of convergence. Next, we fix the mesh size and take different coefficients $ \mu $ to verify that the error of $\bu$ is independent of the viscosity coefficient $ \mu $. Finally, we present a numerical example to confirm the errors of $\bu$ and $p$ are independent of the pressure function $p$.

\begin{example}\label{example 1}
	In this example, we consider the Stokes-Darcy problem on the Stokes domain $\Omega_s =(0,\pi) \times (0, \pi)$ and Darcy domain $\Omega_d =(0, \pi) \times (-\pi, 0)$. The interface is described as $\Gamma =(0, \pi) \times \{0\}$. The exact solutions on the Stokes domain and Darcy domain are as follows:
\begin{eqnarray*}
\bu_s=\left(\begin{array}{ccc}
	2 \sin y \cos y \cos x \\
	(\sin^2 y -2) \sin x 
\end{array}\right),\,
p_s=\sin x \sin y,
\end{eqnarray*}
and
\begin{eqnarray*}
\bu_d=\left(\begin{array}{ccc}
	-(e^y-e^{-y}) \cos x\\
	-(e^y+e^{-y})\sin x 
\end{array}\right),\,
p_d=(e^y-e^{-y})\sin x.
\end{eqnarray*}
\end{example}

In this example, we use the uniform triangular meshes which constructed as follows: the domain is uniformly partitioned into $n \times n$ rectangles, and each rectangular element is divided by a diagonal line with a positive slope. For this example, we compare the accuracy of Algorithm \ref{Algorithm1} and Algorithm \ref{Algorithm2}. We use $P_1$ and $P_2$ WG elements to solve the problem (\ref{StokesEquation1})-(\ref{InterfaceCondition3}). The errors and convergence orders of Algorithm \ref{Algorithm1} and Algorithm \ref{Algorithm2} are reported in Tables \ref{table30}-\ref{table9}. The errors of the velocity function $\bu$ and the pressure function $p$ achieve the optimal convergence orders, which are validated in the algorithms with different WG spaces and viscosity coefficients. It's easy to see from Table \ref{table30} that the errors of the velocity function $\bu$ deteriorate when the viscosity coefficient $\mu$ is too small in the Algorithm \ref{Algorithm2}. However, as predicted by the theory for Algorithm \ref{Algorithm1}, when the viscosity coefficient $\mu$ is either too large or too small, the error of the velocity function $\bu$ remain a constant, and the error of the pressure function is proportional to the viscosity coefficient.

Secondly, we present the results obtained by selecting a fixed mesh size $n=16$ and varying viscosity coefficients $\mu$ in Figures \ref{figure6.2} and \ref{figure6.3}. It can be observed that the errors of the velocity function in Algorithm \ref{Algorithm1} remain unchanged regardless of the viscosity coefficient, indicating that Algorithm \ref{Algorithm1} is pressure-robust. However, Algorithm \ref{Algorithm2} does not exhibit pressure robustness. The results presented in the two figures illustrate that the errors of the velocity function are influenced by the viscosity coefficient. And when the viscosity coefficient is small, the error deteriorates. As evident from the comparison, Algorithm \ref{Algorithm1} can improve the approximation of the velocity function and the pressure function.

\begin{example}\label{example 2}
	In this example, we consider the Stokes-Darcy problem on the Stokes domain $\Omega_s =(0,\frac{1}{2}) \times (0, 1)$ and Darcy domain $\Omega_d =(\frac{1}{2}, 1) \times (0, 1)$. The interface is described as $\Gamma =(0, 1) \times \{\frac{1}{2}\}$. The exact solutions in the free flow region and the porous medium region are as follows:
\begin{eqnarray*}
\bu_s=\left(\begin{array}{ccc}
	0 \\
	0 
\end{array}\right),\,
p_s=(xy)^3-\frac{1}{16},
\end{eqnarray*}
and
\begin{eqnarray*}
\bu_d=\left(\begin{array}{ccc}
	0\\
	0 
\end{array}\right),\,
p_d=(xy)^3-\frac{1}{16}.
\end{eqnarray*}
\end{example}

In this example, we use the same triangular meshes as in Example \ref{example 1} and set the viscosity coefficient $\mu =1$. The velocity functions in both the free flow region and porous medium region are zero. The numerical results of Algorithms \ref{Algorithm1} and \ref{Algorithm2} are presented in Tables \ref{table13}-\ref{table16}. For the velocity function and the pressure function, the errors obtained by Algorithm \ref{Algorithm1} are almost zero, indicating good approximation of the exact solutions. These results also show that the error of the velocity function is independent of the pressure function. However, the errors obtained by Algorithm \ref{Algorithm2} are not zero, but only optimally converge to the exact solutions. By comparison, Algorithm \ref{Algorithm1} performs better in approximating the exact solutions than Algorithm \ref{Algorithm2}.

\begin{table}
	\caption{The errors of Algorithm \ref{Algorithm2} in Example \ref{example 1} with different $\mu$}
	\label{table30}
	\centering
    \resizebox{0.9\textwidth}{!}{
		\begin{tabular}{ccccccc}
			\hline
			\multicolumn{7}{c}{$\mu=10^{-6}$}\\
			\hline
			n&$\3bar Q_h \bu_s -\bu_{s,h} \3bar$&$\|Q_0 \bu_s -\bu_{s,0}\|$&$\|\pi_h p_s - p_{s,h}\|$&$\3bar Q_h \bu_d -\bu_{d,h} \3bar$&$\|Q_0 \bu_d -\bu_{d,0}\|$&$\|\pi_h p_d - p_{d,h}\|$\\
			\hline
             2     & 5.7345E+05 & 5.1507E+05 & 3.2863E-01 & 5.9679E+06 & 4.9345E+06 & 1.3022E+00 \\
             4     & 1.6224E+05 & 7.3834E+04 & 3.9940E-02 & 1.9347E+06 & 8.7817E+05 & 2.1095E-01 \\
             8     & 4.2857E+04 & 9.7943E+03 & 5.4783E-03 & 5.1750E+05 & 1.1431E+05 & 4.1068E-02 \\
             16    & 1.0939E+04 & 1.2514E+03 & 8.5609E-04 & 1.3144E+05 & 1.3907E+04 & 9.7495E-03 \\
             32    & 2.7563E+03 & 1.5773E+02 & 1.6134E-04 & 3.2982E+04 & 1.6963E+03 & 2.4091E-03 \\
			\hline
			\multicolumn{7}{c}{$\mu=1$}\\
			\hline
             2     & 2.9695E+00 & 2.4334E+00 & 2.6096E+00 & 6.4562E+00 & 5.1485E+00 & 1.4295E+00 \\
             4     & 1.8525E+00 & 7.4406E-01 & 1.9843E+00 & 2.5613E+00 & 1.2120E+00 & 6.2413E-01 \\
             8     & 1.0185E+00 & 1.8042E-01 & 1.0561E+00 & 1.0740E+00 & 2.9355E-01 & 2.3380E-01 \\
             16    & 5.5392E-01 & 5.1637E-02 & 4.8939E-01 & 5.0300E-01 & 7.5433E-02 & 6.8340E-02 \\
             32    & 2.8774E-01 & 1.4193E-02 & 2.1843E-01 & 2.4584E-01 & 1.9128E-02 & 1.7520E-02 \\
			\hline
			\multicolumn{7}{c}{$\mu=10^3$}\\
			\hline
             2     & 2.8529E+00 & 2.3262E+00 & 2.6998E+03 & 2.8066E+00 & 2.4199E+00 & 1.0869E+03 \\
             4     & 1.8368E+00 & 7.3578E-01 & 1.9913E+03 & 1.7815E+00 & 9.5084E-01 & 6.1345E+02 \\
             8     & 1.0169E+00 & 1.7994E-01 & 1.0563E+03 & 9.5669E-01 & 2.7889E-01 & 2.3169E+02 \\
             16    & 5.5376E-01 & 5.1611E-02 & 4.8939E+02 & 4.8756E-01 & 7.4679E-02 & 6.7722E+01 \\
             32    & 2.8773E-01 & 1.4192E-02 & 2.1843E+02 & 2.4387E-01 & 1.9086E-02 & 1.7357E+01 \\
			\hline
		\end{tabular}
	}
\end{table}

\begin{table}
	\caption{The errors and convergence orders of Algorithm \ref{Algorithm1} in Example \ref{example 1} with $k=1$}
	\label{table1}
	\centering
	{
		\begin{tabular}{ccccccc}
			\hline
			n&$\3bar Q_h \bu_s -\bu_{s,h} \3bar$&order&$\|Q_0 \bu_s -\bu_{s,0}\|$&order&$\|\pi_h p_s - p_{s,h}\|$&order\\
			\hline
			&&&$\mu =1 $&&&\\
			\hline
            2     & 2.5649E+00 &-- & 2.0451E+00 & -- & 2.5465E+00 &-- \\
            4     & 1.8771E+00 & 0.4504  & 7.0958E-01 & 1.5271  & 1.9940E+00 & 0.3529 \\
            8     & 1.0366E+00 & 0.8567  & 1.8034E-01 & 1.9762  & 1.0567E+00 & 0.9161   \\
            16    & 5.5714E-01 & 0.8957  & 5.1662E-02 & 1.8036  & 4.8913E-01 & 1.1112 \\
            32    & 2.8820E-01 & 0.9510  & 1.4193E-02 & 1.8639  & 2.1834E-01 & 1.1636 \\
			\hline
			n&$\3bar Q_h \bu_d -\bu_{d,h} \3bar$&order&$\|Q_0 \bu_d -\bu_{d,0}\|$&order&$\|\pi_h p_d - p_{d,h}\|$&order\\
			\hline
			2      & 6.4637E+00 &-- & 5.1612E+00 & --  & 1.3990E+00 & --   \\
			4      & 2.5636E+00 & 1.3342  & 1.2163E+00 & 2.0852  & 6.0150E-01 & 1.2178    \\
			8      & 1.0741E+00 & 1.2550  & 2.9434E-01 & 2.0469  & 2.3044E-01 & 1.3842  \\
			16     & 5.0300E-01 & 1.0945  & 7.5481E-02 & 1.9633  & 6.8009E-02 & 1.7606    \\
			32     & 2.4584E-01 & 1.0328  & 1.9130E-02 & 1.9802  & 1.7491E-02 & 1.9591    \\
			\hline
		\end{tabular}
	}
\end{table}

\begin{table}
	\caption{The errors and convergence orders of Algorithm \ref{Algorithm1} in Example \ref{example 1} with $k=1$}
	\label{table2}
	\centering
	{
		\begin{tabular}{ccccccc}
			\hline
			n&$\3bar Q_h \bu_s -\bu_{s,h} \3bar$&order&$\|Q_0 \bu_s -\bu_{s,0}\|$&order&$\|\pi_h p_s - p_{s,h}\|$&order\\
			\hline
			&&&$\mu =10^3 $&&&\\
			\hline
            2     & 2.5649E+00 & --  & 2.0451E+00 & --  & 2.5465E+03 & --  \\
            4     & 1.8771E+00 & 0.4504  & 7.0958E-01 & 1.5271  & 1.9940E+03 & 0.3529  \\
            8     & 1.0366E+00 & 0.8567  & 1.8034E-01 & 1.9762  & 1.0567E+03 & 0.9161 \\
            16    & 5.5714E-01 & 0.8957  & 5.1662E-02 & 1.8036  & 4.8913E+02 & 1.1112   \\
            32    & 2.8820E-01 & 0.9510  & 1.4193E-02 & 1.8639  & 2.1834E+02 & 1.1636   \\
			\hline
			n&$\3bar Q_h \bu_d -\bu_{d,h} \3bar$&order&$\|Q_0 \bu_d -\bu_{d,0}\|$&order&$\|\pi_h p_d - p_{d,h}\|$&order\\
			\hline
			2      & 6.4637E+00 & --  & 5.1612E+00 & -- & 1.3990E+03 &--  \\
			4     & 2.5636E+00 & 1.3342  & 1.2163E+00 & 2.0852  & 6.0150E+02 & 1.2178   \\
			8      & 1.0741E+00 & 1.2550  & 2.9434E-01 & 2.0469  & 2.3044E+02 & 1.3842    \\
			16    & 5.0300E-01 & 1.0945  & 7.5481E-02 & 1.9633  & 6.8009E+01 & 1.7606  \\
			32    & 2.4584E-01 & 1.0328  & 1.9130E-02 & 1.9802  & 1.7491E+01 & 1.9591  \\
			\hline
		\end{tabular}
	}
\end{table}

\begin{table}
	\caption{The errors and convergence orders of Algorithm \ref{Algorithm1} in Example \ref{example 1} with $k=1$}
	\label{table3}
	\centering
	{
		\begin{tabular}{ccccccc}
			\hline
			n&$\3bar Q_h \bu_s -\bu_{s,h} \3bar$&order&$\|Q_0 \bu_s -\bu_{s,0}\|$&order&$\|\pi_h p_s - p_{s,h}\|$&order\\
			\hline
			&&&$\mu =10^{-6} $&&&\\
			\hline
		    2     & 2.5649E+00 & --  & 2.0451E+00 & --  & 2.5465E+00 & -- \\
	     	4     & 1.8771E+00 & 0.4504  & 7.0958E-01 & 1.5271  & 1.9940E+00 & 0.3529  \\
		    8     & 1.0366E+00 & 0.8567  & 1.8034E-01 & 1.9762  & 1.0567E+00 & 0.9161  \\
		    16    & 5.5714E-01 & 0.8957  & 5.1662E-02 & 1.8036  & 4.8913E-01 & 1.1112   \\
	    	32    & 2.8820E-01 & 0.9510  & 1.4193E-02 & 1.8639  & 2.1834E-01 & 1.1636    \\		
			\hline
			n&$\3bar Q_h \bu_d -\bu_{d,h} \3bar$&order&$\|Q_0 \bu_d -\bu_{d,0}\|$&order&$\|\pi_h p_d - p_{d,h}\|$&order\\
			\hline
			2      & 6.4637E+00 & --  & 5.1612E+00 & --  & 1.3990E+00 & --   \\
			4      & 2.5636E+00 & 1.3342  & 1.2163E+00 & 2.0852  & 6.0150E-01 & 1.2178   \\
			8    & 1.0741E+00 & 1.2550  & 2.9434E-01 & 2.0469  & 2.3044E-01 & 1.3842   \\
			16    & 5.0300E-01 & 1.0945  & 7.5481E-02 & 1.9633  & 6.8009E-02 & 1.7606   \\
			32    & 2.4584E-01 & 1.0328  & 1.9130E-02 & 1.9802  & 1.7491E-02 & 1.9591 \\
			\hline
		\end{tabular}
	}
\end{table}

\begin{table}
	\caption{The errors and convergence orders of Algorithm \ref{Algorithm1} in Example \ref{example 1} with $k=2$}
	\label{table7}
	\centering
	{
		\begin{tabular}{ccccccc}
			\hline
			n&$\3bar Q_h \bu_s -\bu_{s,h} \3bar$&order&$\|Q_0 \bu_s -\bu_{s,0}\|$&order&$\|\pi_h p_s - p_{s,h}\|$&order\\
			\hline
			&&&$ \mu=1$&&&\\
			\hline
		     2     & 1.6010E+00 & --  & 1.0421E+00 & --  & 1.7189E+00 & --  \\
	       	 4     & 5.5517E-01 & 1.5280  & 1.9436E-01 & 2.4226  & 5.1539E-01 & 1.7378  \\
		     8     & 1.6371E-01 & 1.7618  & 3.4563E-02 & 2.4914  & 1.2943E-01 & 1.9935   \\
		     16    & 4.5088E-02 & 1.8603  & 5.4381E-03 & 2.6681  & 2.8766E-02 & 2.1698   \\
		     32    & 1.1754E-02 & 1.9396  & 7.5046E-04 & 2.8573  & 6.3358E-03 & 2.1828    \\		
			\hline
			n&$\3bar Q_h \bu_d -\bu_{d,h} \3bar$&order&$\|Q_0 \bu_d -\bu_{d,0}\|$&order&$\|\pi_h p_d - p_{d,h}\|$&order\\
			\hline
			2     & 1.4878E+00 & --  & 1.0089E+00 & --  & 2.1803E-01 & --  \\
			4     & 2.7884E-01 & 2.4157  & 1.1207E-01 & 3.1704  & 3.3286E-02 & 2.7115   \\
			8     & 5.7670E-02 & 2.2736  & 1.2795E-02 & 3.1307  & 3.5287E-03 & 3.2377   \\
			16    & 1.3465E-02 & 2.0987  & 1.4954E-03 & 3.0969  & 3.1932E-04 & 3.4661  \\
			32    & 3.2858E-03 & 2.0348  & 1.7896E-04 & 3.0629  & 3.2830E-05 & 3.2819 \\
			\hline
		\end{tabular}
	}
\end{table}

\begin{table}
	\caption{The errors and convergence orders of Algorithm \ref{Algorithm1} in Example \ref{example 1} with $k=2$}
	\label{table8}
	\centering
	{
		\begin{tabular}{ccccccc}
			\hline
			n&$\3bar Q_h \bu_s -\bu_{s,h} \3bar$&order&$\|Q_0 \bu_s -\bu_{s,0}\|$&order&$\|\pi_h p_s - p_{s,h}\|$&order\\
			\hline
			&&&$ \mu=10^3$&&&\\
			\hline
            2     & 1.6010E+00 & --  & 1.0421E+00 & --  & 1.7189E+03 &--  \\
            4     & 5.5517E-01 & 1.5280  & 1.9436E-01 & 2.4226  & 5.1539E+02 & 1.7378    \\
            8     & 1.6371E-01 & 1.7618  & 3.4563E-02 & 2.4914  & 1.2943E+02 & 1.9935   \\
            16    & 4.5088E-02 & 1.8603  & 5.4381E-03 & 2.6681  & 2.8766E+01 & 2.1698  \\
            32    & 1.1754E-02 & 1.9396  & 7.5046E-04 & 2.8573  & 6.3358E+00 & 2.1828   \\
			\hline
			n&$\3bar Q_h \bu_d -\bu_{d,h} \3bar$&order&$\|Q_0 \bu_d -\bu_{d,0}\|$&order&$\|\pi_h p_d - p_{d,h}\|$&order\\
			\hline
			2      & 1.4878E+00 & --  & 1.0089E+00 & --  & 2.1803E+02 & --  \\
			4      & 2.7884E-01 & 2.4157  & 1.1207E-01 & 3.1704  &  3.3286E+01 & 2.7115 \\
			8      & 5.7670E-02 & 2.2736  & 1.2795E-02 & 3.1307  & 3.5287E+00 & 3.2377  \\
			16     & 1.3465E-02 & 2.0987  & 1.4954E-03 & 3.0969  & 3.1932E-01 & 3.4661   \\
			32     & 3.2858E-03 & 2.0348  & 1.7896E-04 & 3.0629  & 3.2830E-02 & 3.2819  \\
			\hline
		\end{tabular}
	}
\end{table}

\begin{table}
	\caption{The errors and convergence orders of Algorithm \ref{Algorithm1} in Example \ref{example 1} with $k=2$}
	\label{table9}
	\centering
	{
		\begin{tabular}{ccccccc}
			\hline
			n&$\3bar Q_h \bu_s -\bu_{s,h} \3bar$&order&$\|Q_0 \bu_s -\bu_{s,0}\|$&order&$\|\pi_h p_s - p_{s,h}\|$&order\\
			\hline
			&&&$ \mu=10^{-6}$&&&\\
			\hline
		    2     & 1.6010E+00 & --  & 1.0421E+00 & --  & 1.7189E-06 & --   \\
		    4     & 5.5517E-01 & 1.5280  & 1.9436E-01 & 2.4226  & 5.1539E-07 & 1.7378    \\
		    8     & 1.6371E-01 & 1.7618  & 3.4563E-02 & 2.4914  & 1.2943E-07 & 1.9935   \\
	     	16    & 4.5088E-02 & 1.8603  & 5.4381E-03 & 2.6681  & 2.8766E-08 & 2.1698   \\
	    	32    & 1.1754E-02 & 1.9396  & 7.5046E-04 & 2.8573  & 6.3358E-09 & 2.1828   \\
		
			\hline
			n&$\3bar Q_h \bu_d -\bu_{d,h} \3bar$&order&$\|Q_0 \bu_d -\bu_{d,0}\|$&order&$\|\pi_h p_d - p_{d,h}\|$&order\\
			\hline
			2      & 1.4878E+00 & --  & 1.0089E+00 & --  & 2.1803E-07 & -- \\
			4     & 2.7884E-01 & 2.4157  & 1.1207E-01 & 3.1704  & 3.3286E-08 & 2.7115  \\
			8     & 5.7670E-02 & 2.2736  & 1.2795E-02 & 3.1307  & 3.5287E-09 & 3.2377  \\
			16    & 1.3465E-02 & 2.0987  & 1.4954E-03 & 3.0969  & 3.1932E-10 & 3.4661   \\
			32     & 3.2858E-03 & 2.0348  & 1.7896E-04 & 3.0629  & 3.2830E-11 & 3.2819  \\
			\hline
		\end{tabular}
	}
\end{table}

\begin{figure*}[t!]
	\centering
	\begin{subfigure}[t]{0.33\linewidth}
		\centering
		\includegraphics[width=1\linewidth]{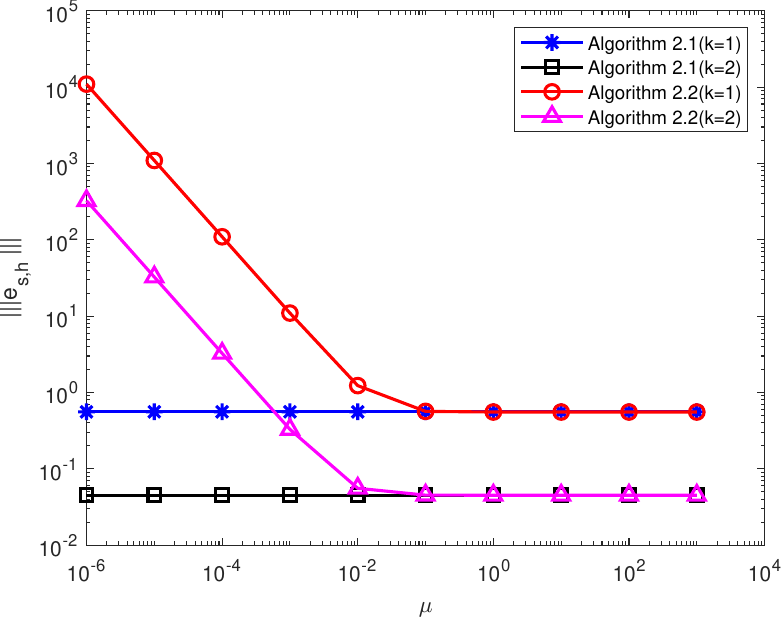}
	\end{subfigure}%
	\begin{subfigure}[t]{0.33\linewidth}
		\centering
		\includegraphics[width=1\linewidth]{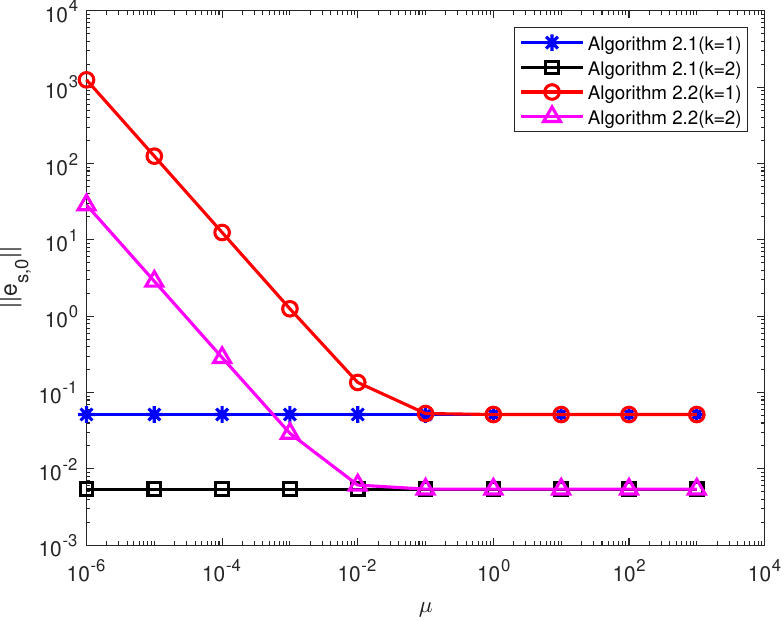}
	\end{subfigure}
	\begin{subfigure}[t]{0.33\linewidth}
		\centering
		\includegraphics[width=1\linewidth]{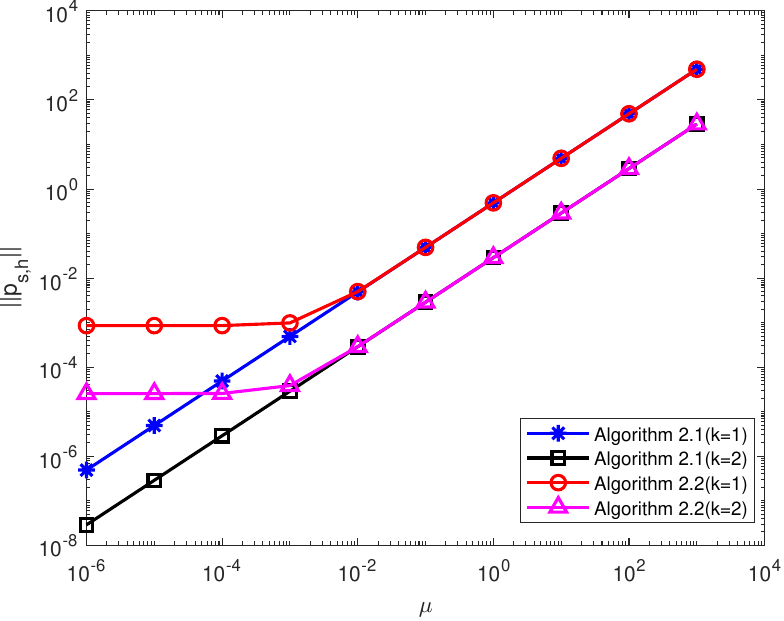}
	\end{subfigure}
	\caption{Comparison of errors between Algorithms \ref{Algorithm1} and \ref{Algorithm2} in the Stokes domain with different $\mu$}
	\label{figure6.2}
\end{figure*}

\begin{figure*}[t!]
	\centering
	\begin{subfigure}[t]{0.33\linewidth}
		\centering
		\includegraphics[width=1.0\linewidth]{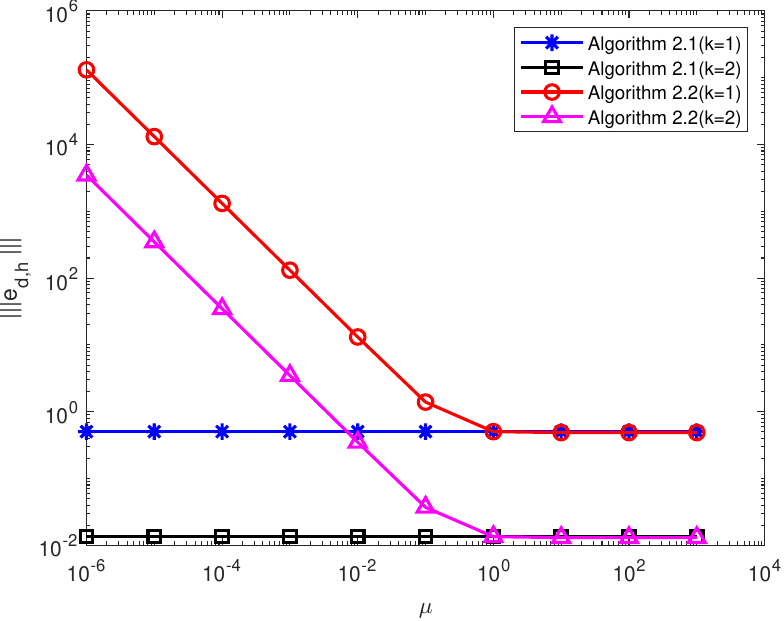}
	\end{subfigure}%
	\begin{subfigure}[t]{0.33\linewidth}
		\centering
		\includegraphics[width=1.0\linewidth]{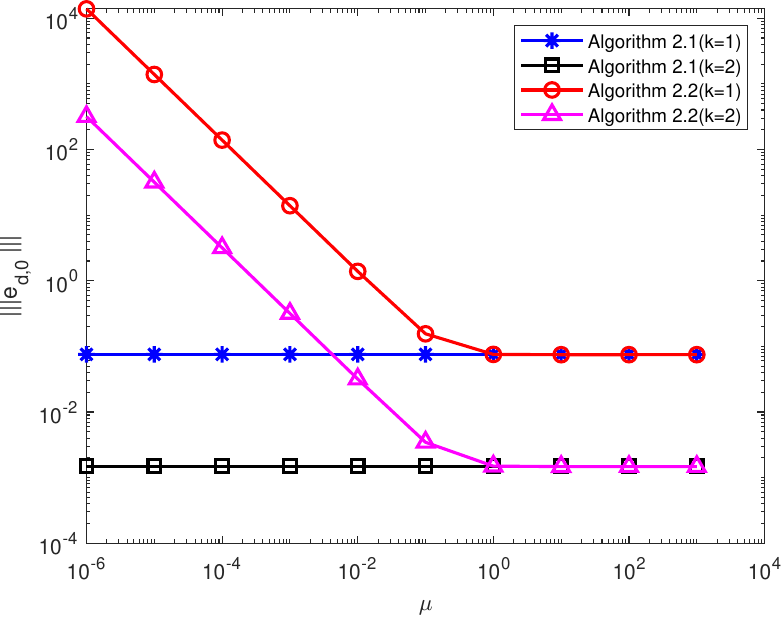}
	\end{subfigure}
	\begin{subfigure}[t]{0.33\linewidth}
		\centering
		\includegraphics[width=1.0\linewidth]{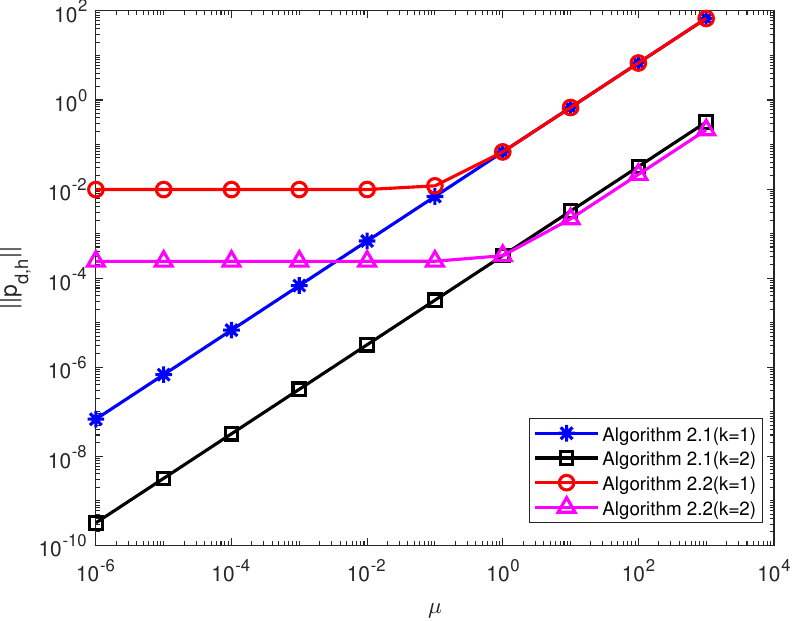}
	\end{subfigure}
	\caption{Comparison of errors between Algorithms \ref{Algorithm1} and \ref{Algorithm2} in the Darcy domain with different $\mu$}
	\label{figure6.3}
\end{figure*}

\begin{table}
\caption{{\color{black}The errors of Algorithm \ref{Algorithm1} in Example \ref{example 2} with different $k$}}
\label{table13}
\centering
	\resizebox{0.9\textwidth}{!}
{
	\begin{tabular}{ccccccc}
		\hline
		\multicolumn{7}{c}{$k=1$}\\
		\hline
		n&$\3bar Q_h \bu_s -\bu_{s,h} \3bar$&$\|Q_0 \bu_s -\bu_{s,0}\|$&$\|\pi_h p_s - p_{s,h}\|$&$\3bar Q_h \bu_d -\bu_{d,h} \3bar$&$\|Q_0 \bu_d -\bu_{d,0}\|$&$\|\pi_h p_d - p_{d,h}\|$\\
		\hline
         2     & 0.0000E+00 & 0.0000E+00 & 0.0000E+00 & 0.0000E+00 & 0.0000E+00 & 0.0000E+00 \\
         4     & 0.0000E+00 & 0.0000E+00 & 0.0000E+00 & 1.0000E-15 & 1.0000E-15 & 0.0000E+00 \\
         8     & 1.0000E-15 & 0.0000E+00 & 1.0000E-15 & 1.0000E-15 & 1.0000E-15 & 0.0000E+00 \\
         16    & 3.0000E-15 & 0.0000E+00 & 3.0000E-15 & 5.0000E-15 & 3.0000E-15 & 1.0000E-15 \\
         32    & 3.0000E-14 & 2.0000E-15 & 2.9000E-14 & 2.2000E-14 & 8.0000E-15 & 1.1000E-14 \\
		\hline
				\multicolumn{7}{c}{$k=2$}\\
		\hline
		    2     & 1.0000E-15 & 0.0000E+00 & 0.0000E+00 & 3.0000E-15 & 4.0000E-15 & 0.0000E+00 \\
		4     & 1.0000E-15 & 0.0000E+00 & 1.0000E-15 & 7.0000E-15 & 9.0000E-15 & 1.0000E-15 \\
		8     & 1.1000E-14 & 0.0000E+00 & 1.1000E-14 & 2.4000E-14 & 2.0000E-14 & 2.0000E-15 \\
		16    & 1.8000E-14 & 0.0000E+00 & 1.8000E-14 & 6.9000E-14 & 4.2000E-14 & 5.0000E-15 \\
		32    & 7.8000E-14 & 1.0000E-15 & 7.6000E-14 & 1.7900E-13 & 8.9000E-14 & 2.1000E-14 \\
		\hline
	\end{tabular}
}
\end{table}


\begin{table}
	\caption{The errors and convergence orders of Algorithm \ref{Algorithm2} in Example \ref{example 2} with $k=1$}
	\label{table15}
	\centering
	{
		\begin{tabular}{ccccccc}
			\hline
			n&$\3bar Q_h \bu_s -\bu_{s,h} \3bar$&order&$\|Q_0 \bu_s -\bu_{s,0}\|$&order&$\|\pi_h p_s - p_{s,h}\|$&order\\
			\hline
             2     & 6.8112E-03 & --  & 1.4570E-03 & --  & 1.8508E-03 & --   \\
             4     & 1.9684E-03 & 1.7909  & 2.1347E-04 & 2.7709  & 3.0793E-04 & 2.5875    \\
             8     & 5.2126E-04 & 1.9170  & 2.8576E-05 & 2.9011  & 5.9622E-05 & 2.3687    \\
             16    & 1.3340E-04 & 1.9662  & 3.6743E-06 & 2.9593  & 1.3053E-05 & 2.1915    \\
             32    & 3.3686E-05 & 1.9856  & 4.6468E-07 & 2.9831  & 3.0983E-06 & 2.0748   \\
			\hline
			n&$\3bar Q_h \bu_d -\bu_{d,h} \3bar$&order&$\|Q_0 \bu_d -\bu_{d,0}\|$&order&$\|\pi_h p_d - p_{d,h}\|$&order\\
			\hline
            2      & 4.8209E-02 & --  & 1.4021E-02 &--  & 6.3653E-03 & -- \\
            4    & 1.2943E-02 & 1.8972  & 1.7870E-03 & 2.9720  & 1.5962E-03 & 1.9956 \\
            8    & 3.2903E-03 & 1.9758  & 2.1625E-04 & 3.0467  & 3.9769E-04 & 2.0050  \\
            16   & 8.2676E-04 & 1.9927  & 2.6311E-05 & 3.0390  & 9.9265E-05 & 2.0023 \\
            32    & 2.0701E-04 & 1.9978  & 3.2373E-06 & 3.0228  & 2.4804E-05 & 2.0007  \\
			\hline
		\end{tabular}
	}
\end{table}
\begin{table}
	\caption{The errors and convergence orders of Algorithm \ref{Algorithm2} in Example \ref{example 2} with $k=2$}
	\label{table16}
	\centering
	{
		\begin{tabular}{ccccccc}
			\hline
			n&$\3bar Q_h \bu_s -\bu_{s,h} \3bar$&order&$\|Q_0 \bu_s -\bu_{s,0}\|$&order&$\|\pi_h p_s - p_{s,h}\|$&order\\
			\hline
            2     & 1.5290E-03 &--  & 2.6384E-04 & --  & 3.2078E-04 & --   \\
            4     & 2.0851E-04 & 2.8744  & 1.7960E-05 & 3.8768  & 3.2147E-05 & 3.3188    \\
            8     & 2.6774E-05 & 2.9612  & 1.1464E-06 & 3.9696  & 3.3041E-06 & 3.2824   \\
            16    & 3.3775E-06 & 2.9868  & 7.1905E-08 & 3.9949  & 3.5397E-07 & 3.2226   \\
            32    & 4.2366E-07 & 2.9950  & 4.4910E-09 & 4.0010  & 3.9705E-08 & 3.1562    \\
			\hline
			n&$\3bar Q_h \bu_d -\bu_{d,h} \3bar$&order&$\|Q_0 \bu_d -\bu_{d,0}\|$&order&$\|\pi_h p_d - p_{d,h}\|$&order\\
			\hline
            2     & 7.9615E-03 &-- & 1.8062E-03 & -- & 8.2251E-04 & --\\
            4     & 1.0386E-03 & 2.9384  & 1.1530E-04 & 3.9695  & 1.0318E-04 & 2.9949  \\
            8     & 1.3118E-04 & 2.9850  & 7.1442E-06 & 4.0125  & 1.2894E-05 & 3.0003   \\
            16    & 1.6440E-05 & 2.9963  & 4.4209E-07 & 4.0144  & 1.6113E-06 & 3.0004   \\
            32    & 2.0564E-06 & 2.9990  & 2.7450E-08 & 4.0094  & 2.0140E-07 & 3.0002  \\
			\hline
		\end{tabular}
	}
\end{table}

\section{Conclusion}
In this paper, we propose the pressure-robust weak Galerkin finite element scheme to solve the Stokes-Darcy problem by constructing the divergence-free velocity reconstruction operator. The proposed WG scheme can improve the velocity function and pressure function simultaneously. We prove that the error of the velocity function are independent of the pressure function and viscosity coefficient. And the numerical solutions converge to the exact solution at the optimal orders in the $H^1$ norm and $L^2$ norm. The numerical results agree with the theoretical analysis. This observation demonstrates that the proposed WG scheme is pressure-robust and efficient to solve the Stokes-Darcy problem.

\vspace{\baselineskip}
\noindent {\bf Acknowledgments.} This research was supported by the National Natural Science Foundation of China (grant No. 11901015, 12271208, 12001232, 12201246, 22341302), the National Key Research and Development Program of China (grant No. 2020YFA0713602, 2023YFA1008803), Fundamental Research Funds for the Central Universities housed at Jilin University (grant No. 93Z172023Z05), and the Key Laboratory of Symbolic Computation and Knowledge Engineering of Ministry of Education of China housed at Jilin University.


\begin{thebibliography}{10}


\bibitem{bookfiniteelementmethod1} S.~C. Brenner and L.~R. Scott, \emph{The mathematical theory of finite element methods},  Springer-Verlag, 2002.
\bibitem{bookfiniteelementmethod2} F.~Brezzi and M.~Fortin, \emph{ Mixed and hybrid finite element methods},
 Springer-Verlag, 1991.
\bibitem{WGStokesDrcyWang} W.~Chen, F.~Wang, and Y.~Wang,  Weak {G}alerkin method for the coupled {D}arcy-{S}tokes flow, \emph{IMA J. Numer. Anal.}, \textbf{36}:2(2016), 897--921.
\bibitem{bookfiniteelementmethod3} M.~Crouzeix and P.-A. Raviart,  Conforming and nonconforming finite
	element methods for solving the stationary {S}tokes equations. {I},  \emph{ Rev. Fran\c{c}aise Automat. Informat. Recherche Op\'{e}rationnelle S\'{e}r. Rouge}, \textbf{7}:3(1973), 33--75.
\bibitem{MFEMSD2} M.~Discacciati, E.~Miglio, and A.~Quarteroni, Mathematical and
	numerical models for coupling surface and groundwater flows, \emph{Appl. Numer. Math.}, \textbf{43}:1-2(2002), 57--74.
\bibitem{ern_finite_2021} A.~Ern and J.-L. Guermond, \emph{Finite {Elements} {I}: {Approximation}
	and {Interpolation}}, Springer, 2021.	
\bibitem{Graddiv1988} L.~P. Franca and T.~J.~R. Hughes, Two classes of mixed finite
	element methods, \emph{Comput. Methods Appl. Mech. Engrg.}, \textbf{69}:1(1988), 89--129.	
\bibitem{MFEMSD1}G.~N. Gatica, R.~Oyarz\'{u}a, and F. -J. Sayas, Analysis of
	fully-mixed finite element methods for the {S}tokes-{D}arcy coupled problem, \emph{Math. Comp.}, \textbf{80}:276(2011), 1911--1948.	
\bibitem{DGSD2} G.~N. Gatica and F.~A. Sequeira, Analysis of the {HDG} method for
	the {S}tokes-{D}arcy coupling, \emph{Numer. Methods Partial Differential Equations}, \textbf{33}(2017), 885--917.	
\bibitem{bookfiniteelementmethod5} V.~Giraud and P.~Raviart, \emph{Finite Element Methods for the Navier-Stokes Equations, Theory and Algorithms}, Springer, 1986.
\bibitem{girault2012finite} V.~Girault and P.-A. Raviart, \emph{Finite element methods for
	Navier-Stokes equations: theory and algorithms}, Springer, 2012.
\bibitem{bookfiniteelementmethod4} M.~D. Gunzburger, \emph{Finite element methods for viscous incompressible
	flows}, Acdemic Press, Inc., 1989.
\bibitem{DGSD3} I.~Igreja and A.~F.~D. Loula, A stabilized hybrid mixed {DGFEM}
	naturally coupling {S}tokes-{D}arcy flows, \emph{Comput. Methods Appl. Mech.
		Engrg.}, \textbf{339}(2018), 739--768.
\bibitem{PressureRobust2017} V.~John, A.~Linke, C.~Merdon, M.~Neilan, and L.~G. Rebholz, On the
	divergence constraint in mixed finite element methods for incompressible
	flows, \emph{SIAM Rev.}, \textbf{59}:3(2017), 492--544.
\bibitem{Graddiv2009} W.~Layton, C.~C. Manica, M.~Neda, M.~Olshanskii, and L.~G. Rebholz,
	On the accuracy of the rotation form in simulations of the {N}avier-{S}tokes
	equations, \emph{J. Comput. Phys.}, \textbf{228}:9(2009), 3433--3447.
\bibitem{MFEMSD3} W.~J. Layton, F.~Schieweck, and I.~Yotov, Coupling fluid flow with
	porous media flow, \emph{SIAM J. Numer. Anal.}, \textbf{40}:6(2002), 2195--2218.	
\bibitem{WGSD1} R.~Li, Y.~Gao, J.~Li, and Z.~Chen, A weak {G}alerkin finite element
	method for a coupled {S}tokes-{D}arcy problem on general meshes, \emph{J. Comput.
		Appl. Math.}, \textbf{334} (2018), 111--127.	
\bibitem{WGSD2} R.~Li, J.~Li, X.~Liu, and Z.~Chen, A weak {G}alerkin finite element
	method for a coupled {S}tokes-{D}arcy problem, \emph{Numer. Methods Partial
		Differential Equations}, \textbf{33}:4(2017), 1352--1373.
\bibitem{Reconstruction1} A.~Linke, A divergence-free velocity reconstruction for
	incompressible flows, \emph{C. R. Math. Acad. Sci. Paris}, \textbf{350}:17-18(2012), 837--840.
\bibitem{Reconstruction2}, A.~Linke, On the role of the
	{H}elmholtz decomposition in mixed methods for incompressible flows and a new
	variational crime, \emph{Comput. Methods Appl. Mech. Engrg.}, \textbf{268}(2014), 782--800.
\bibitem{Reconstruction3}A.~Linke, C.~Merdon, and W.~Wollner, Optimal $L^2$ velocity error
	estimate for a modified pressure-robust crouzeix--raviart stokes element, \emph{IMA Journal of Numerical Analysis}, \textbf{37}:1(2017), 354--374.
\bibitem{MFEMPR1} D.~Lv and H.~Rui, A pressure-robust mixed finite element method for
	the coupled {S}tokes-{D}arcy problem, \emph{J. Comput. Appl. Math.}, \textbf{436}(2024), 115444.
\bibitem{PressureRobust2020} L.~Mu, Pressure robust weak {G}alerkin finite element methods for
	{S}tokes problems, \emph{SIAM J. Sci. Comput.}, \textbf{42}:3 (2020), B608--B629.
\bibitem{PressureRobustCDG2021}L.~Mu, X.~Ye, and S.~Zhang, Development of pressure-robust
	discontinuous {G}alerkin finite element methods for the {S}tokes problem, \emph{J.
		Sci. Comput.}, \textbf{89}:1 (2021), 26.
\bibitem{PressureRobust2021}
L.~Mu, X.~Ye, and S.~Zhang, A stabilizer-free, pressure-robust, and superconvergence weak {G}alerkin finite element method
	for the {S}tokes equations on polytopal mesh, \emph{SIAM J. Sci. Comput.}, \textbf{43}:4(2021), A2614--A2637.
\bibitem{Graddiv2009-2} M.~Olshanskii, G.~Lube, T.~Heister, and J.~L\"{o}we, Grad-div
	stabilization and subgrid pressure models for the incompressible
	{N}avier-{S}tokes equations, \emph{Comput. Methods Appl. Mech. Engrg.}, \textbf{198}:49-52(2009), 3975--3988.
\bibitem{Graddiv2002} M.~A. Olshanskii, A low order {G}alerkin finite element method for
	the {N}avier-{S}tokes equations of steady incompressible flow: a
	stabilization issue and iterative methods, \emph{Comput. Methods Appl. Mech.
		Engrg.}, \textbf{191}:47-48(2002), 5515--5536.
\bibitem{Graddiv2004} M.~A. Olshanskii and A.~Reusken, Grad-div stabilization for {S}tokes
	equations, \emph{Math. Comp.}, \textbf{73}:248(2004), 1699--1718.
\bibitem{WGStokesDrcyPeng2} H.~Peng, Q.~Zhai, R.~Zhang, and S.~Zhang, Weak {G}alerkin and
	continuous {G}alerkin coupled finite element methods for the {S}tokes-{D}arcy
	interface problem, \emph{Commun. Comput. Phys.}, \textbf{28}:3(2020), pp.~1147--1175.
\bibitem{WGStokesDrcyPeng1}
H.~Peng, Q.~Zhai, R.~Zhang, and S.~Zhang, A weak {G}alerkin-mixed finite element method for the {S}tokes-{D}arcy problem,
    \emph{Sci. China Math.}, \textbf{64}:10 (2021), 2357--2380.
\bibitem{DGSD4} B.~Rivi\`ere and I.~Yotov, Locally conservative coupling of {S}tokes
	and {D}arcy flows, \emph{SIAM J. Numer. Anal.}, \textbf{42}:5(2005), 1959--1977.
\bibitem{PressureRobust2021.2}
B.~Wang and L.~Mu, Viscosity robust weak {G}alerkin finite element
	methods for {S}tokes problems, \emph{Electron. Res. Arch.}, \textbf{29}:1(2021), 1881--1895.
\bibitem{VEMSD} G.~Wang, F.~Wang, L.~Chen, and Y.~He, A divergence free weak virtual
	element method for the {S}tokes-{D}arcy problem on general meshes, \emph{Comput.
		Methods Appl. Mech. Engrg.}, \textbf{344}: (2019), pp.~998--1020.	
\bibitem{wang2013weak} J.~Wang and X.~Ye, A weak galerkin finite element method for
	second-order elliptic problems, \emph{J. Comput. Appl. Math.}, \textbf{241}(2013), pp.~103--115.
\bibitem{WGStokes1} J.~Wang and X.~Ye, A weak {G}alerkin
	finite element method for the stokes equations, \emph{Adv. Comput. Math.}, \textbf{42}:1(2016), pp.~155--174.
\bibitem{DGSD} J.~Wen, J.~Su, Y.~He, and H.~Chen,  A discontinuous {G}alerkin method
	for the coupled stokes and {D}arcy problem, \emph{J. Sci. Comput.}, \textbf{85}:2(2020), 26.	
\bibitem{FVMSD} C.~M. Xie, Y.~Luo, and M.~F. Feng, Analysis of a unified stabilized
	finite volume method for the {D}arcy-{S}tokes problem, \emph{Math. Numer. Sin.}, \textbf{33}:2(2011), 133--144.		
\end{thebibliography}
\end{document}